\documentclass[letterpaper, 10pt, conference]{IEEEtran}
\IEEEoverridecommandlockouts
\usepackage{scrextend}
\usepackage{amsmath,amssymb,amsfonts}
\usepackage{mathtools}
\usepackage{graphicx}
\usepackage[inkscapelatex=false]{svg}
\usepackage{orcidlink}

\usepackage{bm}
\usepackage{enumitem}
\usepackage{cite}
\usepackage{cleveref}
\usepackage[dvipsnames]{xcolor}
\usepackage{comment}
\usepackage{amsthm}

\newtheorem{theorem}{Theorem}

\newtheorem{definition}{Definition}

\newtheorem{remark}{Remark}
\newtheorem{assumption}{Assumption}
\newtheorem{lemma}{Lemma}
\newtheorem{proposition}{Proposition}
\newtheorem{corollary}{Corollary}

\Crefname{property}{Property}{Properties}
\Crefname{assumption}{Assumption}{Assumptions}

\newcommand{\infconv}{\mathbin{\square}}

\allowdisplaybreaks

\usepackage{tikz}

\begin{document}
\title{Target Mirror Descent: A Unifying Framework for Solving Monotone Variational Inequalities}
\author{Yu-Wen Chen, 
Can Kizilkale,
Murat Arcak
\thanks{Yu-Wen Chen, Can Kizilkale, and Murat Arcak are with the Department of Electrical Engineering and Computer Sciences, University of California, Berkeley, CA, USA. {\tt\small \{yuwen\_chen, cankizilkale, arcak\}@berkeley.edu}.}%
}

\maketitle
\begin{abstract}
It is well known that mirror descent may diverge or cycle on merely monotone variational inequalities.
In this paper, we propose \emph{Target Mirror Descent} (TMD), a unified framework that stabilizes monotone flows via a target point correction mechanism in the dual update. By appropriate design choices, TMD recovers the proximal point algorithm, extragradient methods, splitting methods, Brown-von Neumann-Nash dynamics, forward-backward-forward dynamics, and discounted mirror descent as special cases.
Thus, we establish a unified perspective on these landmark algorithms and their convergence. Beyond unification, we leverage the TMD framework to correct an equilibrium misalignment in discounted mirror descent and to generalize its higher-order extension beyond interior solutions. Moreover, a key structural feature of TMD is the explicit decoupling of the mirror map from the target determination, which enables \emph{geometric ensembles}: multiple algorithms solve the same problem in parallel using distinct mirror maps, while sharing a common dual update. We show that such an ensemble rigorously reduces to a single TMD with a synthesized mirror map, and thus inherits these convergence guarantees.
\end{abstract}


\section{Introduction}
\label{sec:intro}

Variational inequalities (VIs) provide a powerful mathematical framework for modeling equilibrium 
problems in constrained systems, encompassing optimization, game theory, and physical systems 
as special cases. Unlike scalar minimization, VIs are governed by a vector field $F$ that may lack a potential function, arising naturally in settings such as multi-agent games, traffic equilibria, and resource allocation.

A central challenge is designing algorithms that converge reliably when $F$ is merely monotone --- it is known that gradient (or mirror) descent may cycle or diverge in this regime \cite{Korpelevich1976}. To address this, mirror descent can be shown to converge in the ergodic (time-averaged) sense~\cite{Mertikopoulos2018a}, but last-iterate convergence, which is of greater practical interest, requires more sophisticated algorithms. To this end, several stabilizing algorithms have been developed independently, each with its own convergence analysis: the proximal point algorithm \cite{Eckstein1993,ELFAROUQ2001}, extragradient methods \cite{Korpelevich1976,Nemirovski2004,Dang2015,Diakonikolas2021}, splitting methods \cite{Facchinei2003,Bauschke2017,Bui2021}, Brown-von Neumann-Nash (BNN) dynamics \cite{Sandholm2010}, forward-backward-forward dynamics \cite{Bot2020,Tseng2000}, and (improved) discounted mirror descent \cite{Gao2024a}. While \cite{Mokhtari2020} unifies extragradient and optimistic gradient methods as approximations of the proximal point method, a broader unifying perspective has remained elusive, particularly regarding the connections between game-theoretic dynamics such as BNN and classical optimization algorithms. Moreover, all of these algorithms operate with a single fixed geometric structure, leaving open the question of whether multiple geometries can be combined to yield more robust behavior.

In machine learning, ensemble methods improve robustness by combining multiple models. The underlying intuition, often called the \emph{wisdom of crowds}, is that a diverse collection of models captures complementary specialization that no single model can provide alone. In the context of mirror descent, this motivates a \emph{geometric ensemble}: multiple algorithms, each equipped with a geometrically distinct mirror map, solving the same VI problem in parallel.

The main contributions of this paper are as follows:
\begin{itemize}
    \item We propose \emph{Target Mirror Descent} (TMD), a unified framework that stabilizes 
    monotone flows via a target point correction mechanism in the dual update, with a rigorous convergence analysis.
    \item We demonstrate that, by appropriate design choices, TMD subsumes proximal point algorithms, extragradient methods, splitting methods, BNN dynamics, forward-backward-forward dynamics, discounted mirror descent and its higher-order extension as special cases, establishing a unified structure and analysis across all of them.
    \item A key structural feature of TMD is the explicit decoupling of the mirror map from the 
    target determination. This separation enables \emph{geometric ensembles}: multiple TMD algorithms with distinct mirror maps, whose dual updates are driven by a shared ensemble state. We show that the ensemble rigorously reduces to a single TMD with a synthesized mirror map, inheriting convergence guarantees.
\end{itemize}

The remainder of this paper is organized as follows. \Cref{sec:pre} introduces preliminaries on mirror descent and variational inequalities. \Cref{sec:prob} presents the TMD framework, its convergence analysis, and the unifying connections to landmark algorithms. Section~\ref{sec:GE} develops the geometric ensemble and presents numerical validation.

\medskip
\noindent\textit{Notation:} 
$\langle\cdot,\cdot\rangle$ denotes the inner product, $\top$ the transpose, $\nabla f$ and $\partial f$ the gradient and subdifferential of $f$, $\circ$ the composition of functions, $\mathcal{N}_\mathcal{X}(x)$ the normal cone of $\mathcal{X}$ at $x$, $\operatorname{Im}(f)$ the image of $f$, $\operatorname{int}(\mathcal{X})$ the interior of $\mathcal{X}$, $\Delta^n$ the $n$-dimensional probability simplex, and $\mathbb{R}^n_{>0}$ the positive orthant.

\section{Preliminaries}
\label{sec:pre}
\subsection{Mirror Descent}
Mirror descent generalizes gradient descent by operating in a dual space induced by a strictly convex function $h:\mathcal{X}\to\mathbb{R}$, where $\mathcal{X}\subseteq\mathbb{R}^n$ is nonempty, closed, and convex. Denoting the convex conjugate of $h$ by $h^*$, the map $\nabla h:\mathcal{X}\to\mathbb{R}^n$ sends primal iterates to the dual space, where gradient updates are performed; $\nabla h^*:\mathbb{R}^n\to\mathcal{X}$ then maps back to the primal space, referred to as the \emph{mirror map} in this paper. The Bregman divergence, $D_h(x, y) = h(x) - h(y) - \langle \nabla h(y), x - y \rangle$, quantifies how much a function curves away from its linear approximation. When paired with a carefully chosen mirror map, mirror descent replaces standard Euclidean distance with this divergence, naturally considering the geometric landscape.

\subsection{Variational Inequalities}
\begin{definition}[Variational Inequality]\label{def:VI}
    Given a set $\mathcal{X}\subseteq\mathbb{R}^n$ and a map $F:\mathcal{X}\to\mathbb{R}^n$, an $x^*\in \mathcal{X}$ solves the variational inequality, denoted by $\operatorname{VI}(\mathcal{X},F)$, if $F(x^*)^\top(x-x^*) \geq 0 , \forall x \in \mathcal{X}$.
    The set of all solutions is denoted as $\operatorname{SOL}(\mathcal{X},F)$.
\end{definition}


Some common properties studied for $F$ are as follows.

\begin{definition}[Monotonicity]\label{def:mono}
    Consider a map $F:\mathcal{X}\to\mathbb{R}^n$.\\
    (i) $F$ is pseudo-monotone if
    \begin{align}
        F(y)^\top(x-y)\geq0\ \Rightarrow\ F(x)^\top(x-y)\geq0,\ \forall x,y\in\mathcal{X}. \label{eq:pseudo-monotone}
    \end{align}
    (ii) $F$ is monotone if
    \begin{align}
        \left(F(x)-F(y)\right)^\top\!(x-y)\geq 0, \quad \forall x,y\in \mathcal{X}. \label{eq:monotone}
    \end{align}
    (iii) $F$ is $\sigma$-strongly monotone if, for $\sigma>0$ and a norm $\|\cdot\|$,
    \begin{align}
        \left(F(x)-F(y)\right)^\top\!(x-y)\geq \sigma\|x-y\|^2, \quad \forall x,y\in \mathcal{X}. \label{eq:strongly-monotone}
    \end{align}
\end{definition}


\begin{definition}[Variational Stability]\label{def:VS}
    An $\bar{x}\in\mathcal{X}$ is variationally stable (VS) w.r.t. $F:\mathcal{X}\to\mathbb{R}^n$, if $F(x)^\top\!(x-\bar{x})\geq0, \forall x\in\mathcal{X}$.
    A set $\mathcal{S}\subseteq\mathcal{X}$ is VS w.r.t. $F$, if the inequality holds $\forall\bar{x}\in \mathcal{S}$.
\end{definition}

Note that strong monotonicity implies monotonicity, which in turn implies pseudo-monotonicity.
Moreover,  if $F$ is pseudo-monotone, then $\operatorname{SOL}(\mathcal{X},F)$ is VS w.r.t. $F$.

\section{Target Mirror Descent: A Unified Framework}
\label{sec:prob}

In this paper, we consider variational inequalities with closed and convex $\mathcal{X}$ and continuous $F$. As noted in [1] and [8, Chapter~12.1], gradient (or mirror) descent may diverge or cycle when applied to merely monotone $F$, which motivates our proposed TMD framework.








In \Cref{subsec:TMD}, we introduce the \emph{Target Mirror Descent} (TMD) framework to address this instability and provide the convergence analysis in \Cref{subsec:convergence}. Beyond stability, we demonstrate in \Cref{subsec:connections} that TMD serves as a unifying framework which subsumes many landmark algorithms.
Throughout, we assume well-posedness:
\begin{assumption}
    $\operatorname{SOL}(\mathcal{X},F)\neq \emptyset$.
\end{assumption}

\subsection{Target Mirror Descent (TMD) dynamics}
\label{subsec:TMD}

We propose the \emph{target mirror descent (TMD)} dynamics:
\begin{subequations}\label{eq:TMD}
\begin{align}
    \dot{z}(t) & = \alpha \left(S(T(x(t)))-S(x(t))\right) - \beta \Phi(x(t))  \label{eq:dual_update} \\
    x(t) &= \nabla h^* (z(t)),\label{eq:dual_map}
\end{align}
\end{subequations}
where $\alpha>0$ and $\beta\geq0$ are parameters, $\nabla h^*:\mathbb{R}^n\to\mathcal{X}$ is the mirror map, and $T:\mathcal{X}\to\mathcal{X}$, $S: \mathcal{X}\to\mathbb{R}^n$, and $\Phi:\mathcal{X}\to\mathbb{R}^n$ are design functions satisfying:%
\smallskip
\begin{enumerate}[label={[C\arabic*]}, ref=C\arabic*, leftmargin=3em, align=left]
    \item \label{cond:S} $S$ is $\sigma$-strongly monotone w.r.t. some norm $\|\cdot\|$.
    \item \label{cond:VS} There exists an $\bar{x}\in\mathcal{X}$ which is VS w.r.t. $\Phi$.
    \item \label{cond:surrogate} $\operatorname{SOL}(\mathcal{X},\Phi)\subseteq\operatorname{SOL}(\mathcal{X},F)$.
    \item \label{cond:ST} $T=(S+\Phi+\mathcal{N}_{\mathcal{X}})^{-1}\circ S$ is well-defined,
\end{enumerate}
\smallskip%
where $\mathcal{N}_\mathcal{X}(x)$ is the normal cone of $\mathcal{X}$ at $x$. By embedding $\mathcal{N}_\mathcal{X}$ directly into the formulation of $T$, the system implicitly enforces feasibility, acting as a projection that ensures $\operatorname{Im}(T)\subseteq \mathcal{X}$.
We provide a sufficient condition for \ref{cond:ST}; weaker conditions can be found in \cite{Harker1990}.

\begin{lemma}[{\cite[Corollary~3.2]{Harker1990}}] \label{lem:well-defined}
    If $S+\Phi$ is strongly monotone, then $T$ is well-defined.
\end{lemma}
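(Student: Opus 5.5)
To show that $T$ is well-defined, we must show that for every $x\in\mathcal{X}$ the inclusion
\begin{align*}
S(x)\in (S+\Phi+\mathcal{N}_{\mathcal{X}})(y)
\end{align*}
has exactly one solution $y\in\mathcal{X}$. Set $G:=S+\Phi$ and $c:=S(x)$. The inclusion $c\in G(y)+\mathcal{N}_\mathcal{X}(y)$ says precisely that $y$ solves $\operatorname{VI}(\mathcal{X},G-c)$, i.e., $(G(y)-c)^\top(w-y)\ge 0$ for all $w\in\mathcal{X}$. So the statement reduces to showing that, for each fixed $c\in\mathbb{R}^n$, the VI with the shifted map $G-c$ has a unique solution. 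We use throughout the paper's standing assumptions that $\mathcal{X}$ is nonempty, closed and convex and that $S$ and $\Phi$ (hence $G$) are continuous. Subtracting the constant $c$ leaves monotonicity differences unchanged, so $G-c$ is strongly monotone with the same modulus $\mu>0$ as $G$.

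\emph{Uniqueness.} Suppose $y_1,y_2$ both solve the VI. Test the inequality for $y_1$ at $w=y_2$ and the inequality for $y_2$ at $w=y_1$, then add the two. This gives $(G(y_1)-G(y_2))^\top(y_1-y_2)\le 0$. Strong monotonicity bounds the left side below by $\mu\|y_1-y_2\|^2$, so $y_1=y_2$.

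\emph{Existence.} This is the main step, since $\mathcal{X}$ may be unbounded. The plan is a standard coercivity argument.
\begin{enumerate}[label=(\roman*)]
\item Fix $x_0\in\mathcal{X}$. By strong monotonicity and Cauchy--Schwarz (with the dual norm),
\begin{align*}
(G(y)-c)^\top(y-x_0)\ge \mu\|y-x_0\|^2-\|G(x_0)-c\|_*\|y-x_0\|.
\end{align*}
The right side is positive whenever $\|y-x_0\|>R:=\|G(x_0)-c\|_*/\mu$.
\item Let $\mathcal{X}_r:=\mathcal{X}\cap\{\|y-x_0\|\le r\}$ for $r>R$. This set is compact, convex and nonempty, so Hartman--Stampacchia (equivalently, Brouwer applied to $y\mapsto\Pi_{\mathcal{X}_r}(y-(G(y)-c))$) gives a solution $y_r$ of the VI on $\mathcal{X}_r$.
\item By (i), $y_r$ lies strictly inside the ball. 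Otherwise testing the restricted VI at $w=x_0$ gives $(G(y_r)-c)^\top(y_r-x_0)\le 0$, which contradicts (i).
\item Since $y_r$ is in the interior of the ball, every $w\in\mathcal{X}$ yields a point $y_r+\epsilon(w-y_r)\in\mathcal{X}_r$ for small $\epsilon>0$. Testing the restricted VI there and dividing by $\epsilon$ shows that $y_r$ solves the full $\operatorname{VI}(\mathcal{X},G-c)$.
\end{enumerate}

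Existence and uniqueness together show that $T(x)$ is a single point of $\mathcal{X}$ for every $x\in\mathcal{X}$, so $T$ is well-defined with $\operatorname{Im}(T)\subseteq\mathcal{X}$. The step needing the most care is the existence argument on unbounded $\mathcal{X}$, and specifically the localization in (iii)--(iv). Finally, the result is exactly \cite[Corollary~3.2]{Harker1990} applied to the map $G-c$, so one may simply cite it once the reduction to a strongly monotone VI has been made.
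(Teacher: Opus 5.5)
Your proposal is correct and follows the paper's route. The paper simply invokes \cite[Corollary~3.2]{Harker1990}, and you make explicit the reduction of $T(x)$ to the strongly monotone $\operatorname{VI}(\mathcal{X},S+\Phi-S(x))$ together with the standard uniqueness and compact-truncation/coercivity argument behind that corollary. Your explicit assumption that $S$ and $\Phi$ are continuous is genuinely needed: the paper only states continuity of $F$, but the cited corollary requires continuity of $S+\Phi$ as well.
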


To clarify the mechanics of the TMD dynamics \eqref{eq:TMD}, consider the case where $\alpha=0$ and $\Phi=F$; the framework then reduces exactly to mirror descent. The term $S \circ T - S$ acts as a correction operator that enhances stability. Specifically, for a given state $x$, the mechanism computes a \emph{target point} $T(x)$. The operator $S$ then maps both $x$ and $T(x)$ to the dual space, and the dual discrepancy $S(T(x)) - S(x)$ provides a corrected update direction. The formulation admits a clear graphical interpretation; \Cref{fig:TMD} illustrates the idea.

While the selection of $S$, $\Phi$, and $T$ offers significant flexibility, they are constrained by functional requirements: $S$ maps between spaces, $\Phi$ serves as a surrogate function of $F$, and the pair $S$ and $\Phi$ must collectively define the target mechanism $T$, as per conditions \ref{cond:S}, \ref{cond:surrogate}, and \ref{cond:ST}. We thoroughly demonstrate the broad applicability of TMD in \Cref{subsec:connections}. Here, we merely illustrate a baseline scenario: an $L$-Lipschitz continuous and pseudo-monotone $F$. For any $\sigma$-strongly monotone $S$ and $\Phi = \eta F$ with $0<\eta < \frac{\sigma}{L}$, $S+\Phi$ is strongly monotone, rendering $T$ well-defined by \Cref{lem:well-defined}. Thus, all \ref{cond:S}--\ref{cond:ST} are satisfied.

\begin{figure}[h!]
    \centering
    \includegraphics[width=.8\columnwidth]{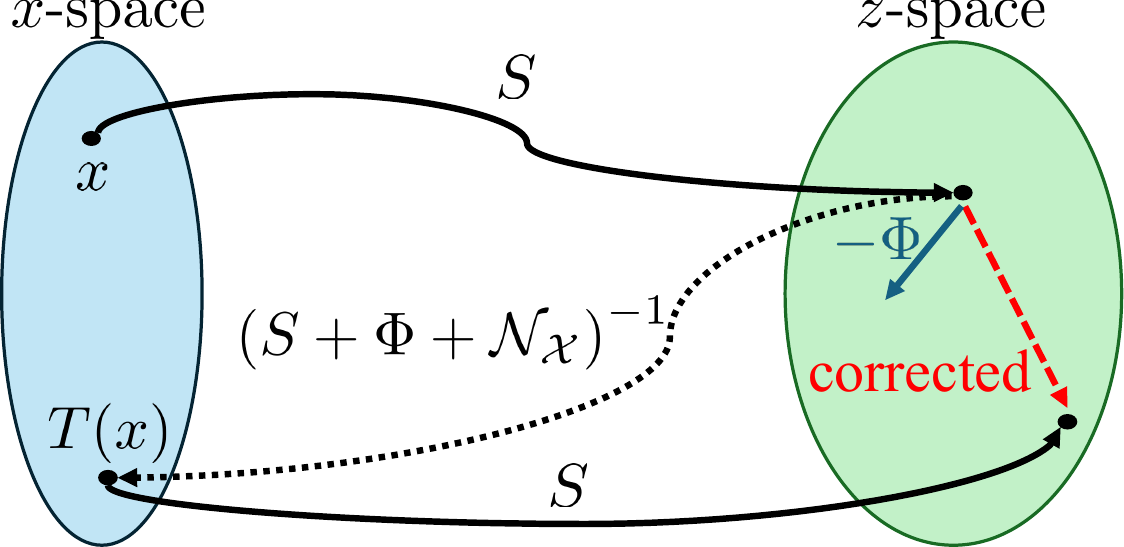}
    \caption{Graphical illustration of the TMD dynamics \eqref{eq:TMD}.
    The target point $T(x)$ is computed in the primal space and mapped to the dual via $S$, producing the corrected update direction $S(T(x))-S(x)$, marked by the red dashed line.}
    \label{fig:TMD}
\end{figure}

A key feature of TMD dynamics \eqref{eq:TMD} is the decoupling of the mirror map $\nabla h$ from $S,T,\Phi$ in dual update \eqref{eq:dual_update}. Essentially, this retains the structure of mirror descent, except that $F$ is preconditioned by the target correction \eqref{eq:dual_update}, enhancing the stability. This decoupling allows geometric ensembles, where multiple algorithms maintain distinct mirror maps while sharing a common dual update, as detailed in \Cref{sec:GE}. Many existing algorithms stabilizing monotone flows preclude this benefit. For example, in \Cref{subsec:connections}, we demonstrate that while extragradient (mirror-prox) methods belong to the TMD framework, their choices for $S$ and $T$ are bound to $\nabla h$, which excludes the possibility of geometric ensembles.

\subsection{Convergence analysis for TMD}
\label{subsec:convergence}

\begin{theorem} \label{thm:asymp}
    TMD asymptotically approaches $\operatorname{SOL}(\mathcal{X},\Phi)$.
\end{theorem}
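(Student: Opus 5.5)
The plan is a Lyapunov argument built on the Bregman-type function $V(z)=h^*(z)-h^*(\nabla h(\bar x))-\langle \bar x, z-\nabla h(\bar x)\rangle$. Equivalently, $V(z)=D_h(\bar x,x)$ with $x=\nabla h^*(z)$, where $\bar x$ is the VS point from \ref{cond:VS}. Along trajectories,
\[
\dot V=\langle x-\bar x,\dot z\rangle=\alpha\langle x-\bar x, S(T(x))-S(x)\rangle-\beta\langle \Phi(x),x-\bar x\rangle .
\]
By \ref{cond:VS}, the second term is nonpositive. The work is in the first term.

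\textbf{Step 1: sign of the target correction.} Write $y=T(x)$. By \ref{cond:ST},
\[
S(x)-S(y)-\Phi(y)\in\mathcal{N}_{\mathcal X}(y),
\]
so $\langle S(x)-S(y)-\Phi(y),\,u-y\rangle\le 0$ for all $u\in\mathcal X$. Taking $u=\bar x$ gives
\[
\langle S(y)-S(x),\,\bar x-y\rangle\ge -\langle\Phi(y),\bar x-y\rangle=\langle \Phi(y),y-\bar x\rangle\ge 0,
\]
where the last inequality is VS applied at $y$. Now split $x-\bar x=(x-y)+(y-\bar x)$:
\[
\langle x-\bar x,S(y)-S(x)\rangle=-\langle x-y,S(x)-S(y)\rangle+\langle y-\bar x,S(y)-S(x)\rangle\le -\sigma\|x-y\|^2-\langle\Phi(y),y-\bar x\rangle .
\]
This uses \ref{cond:S}. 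Hence
\[
\dot V\le -\alpha\sigma\|x-T(x)\|^2-\alpha\langle \Phi(T(x)),T(x)-\bar x\rangle-\beta\langle\Phi(x),x-\bar x\rangle\le 0 .
\]

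\textbf{Step 2: LaSalle/Barbalat.} $V$ is nonincreasing, and $\int_0^\infty \|x(t)-T(x(t))\|^2dt<\infty$. I will assume enough regularity: $h$ strongly convex (or Legendre with compact sublevel sets), so $D_h(\bar x,\cdot)$ bounds $x(t)$, and $T$ and $\nabla h^*$ continuous, so $\dot x$ is bounded and $t\mapsto\|x-T(x)\|^2$ is uniformly continuous. Barbalat's lemma then gives $\|x(t)-T(x(t))\|\to0$. Every limit point $x_\infty$ of $x(t)$ satisfies $x_\infty=T(x_\infty)$ by continuity of $T$. Fixed points of $T$ are exactly $\operatorname{SOL}(\mathcal X,\Phi)$, since $S(x)\in S(x)+\Phi(x)+\mathcal N_{\mathcal X}(x)$ iff $-\Phi(x)\in\mathcal N_{\mathcal X}(x)$. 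Boundedness plus the fact that all limit points lie in this set yields $\operatorname{dist}(x(t),\operatorname{SOL}(\mathcal X,\Phi))\to0$, which is the claim. \ref{cond:surrogate} then transfers the conclusion to $\operatorname{SOL}(\mathcal X,F)$.

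\textbf{Main obstacle.} The main obstacle is Step 2's regularity. Continuity of $T$ follows from strong monotonicity of $S+\Phi$ (a resolvent-type Lipschitz estimate) but is not immediate from \ref{cond:ST} alone. Boundedness of $x(t)$ needs coercivity of $D_h(\bar x,\cdot)$. If those are unavailable, I would first try an integral/limit-point argument directly: from $\liminf\|x-T(x)\|=0$, extract a subsequence converging to a fixed point.
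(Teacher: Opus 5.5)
Your proposal is correct and follows essentially the same route as the paper. You use the Lyapunov function $D_h(\bar x,x)$, which you write as a Fenchel coupling in $z$, and split $x-\bar x=(x-T(x))+(T(x)-\bar x)$. Combining the normal-cone characterization of $T$ from condition C4, variational stability at $T(x)$, and strong monotonicity of $S$ gives $\dot V\le-\alpha\sigma\|T(x)-x\|^2$, and you finish with Barbalat's lemma and the identification of fixed points of $T$ with $\operatorname{SOL}(\mathcal{X},\Phi)$. The regularity you flag in Step~2 is left implicit in the paper's one-line appeal to Barbalat's lemma: coercivity of $D_h(\bar x,\cdot)$ for boundedness, plus continuity of $S$, $\Phi$, $T$ and Lipschitz continuity of $\nabla h^*$ for uniform continuity and for passing to limit points. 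Stating these assumptions is a clarification of the paper's argument, not a departure from it.
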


\begin{proof}
    Consider the Lyapunov function $V(x)=D_h(\bar{x},x)$, where $D_h$ denotes the Bregman divergence.
    \begin{subequations}\label{eq:Vdot}
    \begin{align}
        \dot{V}&(x(t))=-\left\langle\nabla^2 h (x(t))\dot{x}(t), \bar{x}-x(t)\right\rangle \\
        &=-\left\langle \dot{z}(t), \bar{x}-x(t)\right\rangle \label{eq:apply_dual}\\
        &=-\alpha\left\langle (S\circ T-S)(x(t)), \bar{x}-x(t)\right\rangle\nonumber\\
        &\hspace{3em}-\beta\langle \Phi(x(t)), x(t)-\bar{x}\rangle \label{eq:apply_zdot}\\
        &\leq -\alpha\left\langle (S\circ T-S)(x(t)), \bar{x}-T(x(t))\right\rangle\nonumber\\
        &\hspace{3em}-\alpha\left\langle (S\circ T-S)(x(t)), T(x(t))-x(t)\right\rangle \label{eq:apply_VS}\\
        &=-\alpha\left\langle\Phi(T(x(t))), T(x(t))-\bar{x}\right\rangle+\alpha\langle \theta(t),\bar{x}-T(x(t))\rangle\nonumber\\
        &\hspace{3em}-\alpha\left\langle S(T(x(t)))-S(x(t)), T(x(t))-x(t)\right\rangle \label{eq:apply_ST}\\
        &\leq -\alpha\sigma \|T(x(t))-x(t)\|^2 \label{eq:apply_VS_Monotone},
    \end{align}
    \end{subequations}
    where $\theta(t)\in\mathcal{N}_\mathcal{X}(T(x(t)))$. Note that \eqref{eq:apply_dual} follows from \eqref{eq:dual_map}, \eqref{eq:apply_VS} from \ref{cond:VS}, \eqref{eq:apply_ST} from \ref{cond:ST}, and \eqref{eq:apply_VS_Monotone} from the property of normal cone, \ref{cond:S}, and \ref{cond:VS}.
    By Barbalat's lemma, $x(t)$ asymptotically approaches the set of fixed points of $T$.

    It remains to show that the fixed points of $T$ are contained in $\operatorname{SOL}(\mathcal{X},\Phi)$. From \ref{cond:ST}, with $y=Tx$, we have
    \begin{align}
        S(x)\in S(y)+\Phi(y)+\mathcal{N}_\mathcal{X}(y).
    \end{align}
    By the definition of the normal cone, we obtain
    \begin{align}
        \langle S(x)-S(y)-\Phi(y), u-y\rangle\leq0, \quad \forall u\in\mathcal{X}.\label{eq:fixed_point}
    \end{align}
    For any fixed point $x^*$ of $T$, substituting $x=x^*$ and $y=Tx^*=x^*$ into \eqref{eq:fixed_point} yields $\langle\Phi(x^*), u-x^*\rangle\geq0, \forall u\in\mathcal{X}$. Therefore, the theorem follows.
\end{proof}


Asymptotic approach to a solution set does not guarantee convergence to a specific point within it. To bridge this gap, we consider the stronger condition given as follows.
\smallskip
\begin{enumerate}[label={[C\arabic*$^+$]}, ref=C\arabic*$^+$, leftmargin=3em, start=2, align=left]
    \item \label{cond:VS+} $\operatorname{SOL}(\mathcal{X},\Phi)$ is VS w.r.t. $\Phi$.
\end{enumerate}
\smallskip%
One example satisfying \ref{cond:VS+} is the pseudo-monotone $F$, which follows from the discussion after \Cref{def:VS}.

\begin{corollary}\label{cor:converge}
    TMD with \ref{cond:VS+} converges to $x^*\in \operatorname{SOL}(\mathcal{X},\Phi)$.
\end{corollary}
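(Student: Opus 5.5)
Under \ref{cond:VS+}, every $\bar{x}\in\operatorname{SOL}(\mathcal{X},\Phi)$ is variationally stable w.r.t.\ $\Phi$, so the computation in the proof of \Cref{thm:asymp} can be repeated with $\bar{x}$ chosen as \emph{any} solution. The plan is the standard Opial-type argument. First, for each $\bar{x}\in\operatorname{SOL}(\mathcal{X},\Phi)$, \eqref{eq:Vdot} gives $\frac{d}{dt}D_h(\bar{x},x(t))\le -\alpha\sigma\|T(x(t))-x(t)\|^2\le 0$. Hence $D_h(\bar{x},x(t))$ is nonincreasing and converges to some limit $\ell(\bar{x})\ge 0$. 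Assuming, as is standard for mirror descent, that sublevel sets of $D_h(\bar{x},\cdot)$ are bounded (coercivity of $h$), the trajectory $x(t)$ is bounded and therefore has at least one cluster point $x^*$.

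Second, I identify the cluster point. By \Cref{thm:asymp} and its proof, $\|T(x(t))-x(t)\|\to0$ and $x(t)$ approaches the fixed-point set of $T$, which lies in $\operatorname{SOL}(\mathcal{X},\Phi)$. Take a sequence $t_k\to\infty$ with $x(t_k)\to x^*$. Continuity of $T$ (or closedness of its graph, which follows from continuity of $S,\Phi$ and closedness of the normal cone graph) then yields $T(x^*)=x^*$. Thus $x^*\in\operatorname{SOL}(\mathcal{X},\Phi)$. Equivalently, one can pass to the limit directly in \eqref{eq:fixed_point}.

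Third, I upgrade to full convergence. Since $x^*$ is itself a solution, \ref{cond:VS+} lets me take $\bar{x}=x^*$, so $D_h(x^*,x(t))$ is nonincreasing. Along $t_k$ we have $x(t_k)\to x^*$. Provided $D_h(x^*,x)\to0$ as $x\to x^*$ (continuity of $h$ and of $\nabla h$ on the relevant domain), this gives $\ell(x^*)=0$, and monotonicity forces $D_h(x^*,x(t))\to0$ for all $t$. Strict convexity of $h$, in the form of a Bregman reciprocity condition that $D_h(x^*,x_k)\to0$ implies $x_k\to x^*$ (automatic if $h$ is strongly convex), then gives $x(t)\to x^*$.

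The main obstacle is the regularity needed at the boundary. When $x^*$ lies on $\partial\mathcal{X}$, for example with an entropic $h$ on the simplex, $\nabla h$ may blow up, so continuity of $D_h(x^*,\cdot)$ at $x^*$ and the reciprocity property are not automatic. I would first try to handle this by requiring $h$ to be Legendre-type with the reciprocity condition, as is standard in mirror descent analyses. In that setting $D_h(x^*,x)\to0$ as $x\to x^*$ holds for the usual choices (entropy, Euclidean) even at the boundary. A secondary technical point is the continuity of $T$ needed in the second step; I would obtain it from strong monotonicity of $S+\Phi$ via the standard Lipschitz-type stability of solutions of strongly monotone VIs.
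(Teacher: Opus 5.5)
Your proposal is correct and follows essentially the same route as the paper: under \ref{cond:VS+} the bound \eqref{eq:Vdot} holds for every $\bar{x}\in\operatorname{SOL}(\mathcal{X},\Phi)$, so $x(t)$ is Bregman-Fej\'er monotone with respect to the solution set. The paper then simply cites \cite[Theorem~4.11]{Bauschke2003} for the Opial-type step that you carry out by hand (a cluster point in $\operatorname{SOL}(\mathcal{X},\Phi)$ via \Cref{thm:asymp}, then taking $\bar{x}=x^*$ and using Bregman reciprocity), and the regularity hypotheses you state explicitly, namely bounded sublevel sets, $D_h(x^*,x)\to0$ as $x\to x^*$, reciprocity, and continuity of $S$ and $\Phi$, are what that citation tacitly requires.
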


\begin{proof}
    By \ref{cond:VS+}, the Lyapunov function $V(x)=D_h(\bar{x},x)$ is valid for \emph{any} $\bar{x}\in \operatorname{SOL}(\mathcal{X},\Phi)$. In other words, $x(t)$ is Bregman-Fej\'er monotone w.r.t. $\operatorname{SOL}(\mathcal{X},\Phi)$ \cite{Bauschke2003}. Thus, by \cite[Theorem~4.11]{Bauschke2003}, $x(t)$ converges to a point in the set.
\end{proof}


The proof of \Cref{thm:asymp} reveals that the term $\|T(x)-x\|^2$ in \eqref{eq:apply_VS_Monotone} is strictly positive whenever $x\notin\operatorname{SOL}(\mathcal{X},\Phi)$. 
This positivity can compensate for a mild violation of \ref{cond:VS}, e.g., when no $\bar{x}$ is 
perfectly VS w.r.t.\ $\Phi$, leading to the following relaxed condition.
\smallskip
\begin{enumerate}[label={[C\arabic*$^-$]}, ref=C\arabic*$^-$, leftmargin=3em, start=2, align=left]
\item \label{cond:VS-} There exists an $\bar{x} \in \mathcal{X}$ s.t. $\forall x \notin\operatorname{SOL}(\mathcal{X},\Phi)$,
\begin{align}
    &\alpha\left(\sigma\|T(x)-x\|^2 + \langle \Phi(T(x)), T(x)-\bar{x} \rangle\right) \nonumber\\
    &\hspace{9em} + \beta\langle \Phi(x), x-\bar{x} \rangle > 0. \label{eq:relax}
\end{align}
\end{enumerate}
Note that \ref{cond:VS} implies \ref{cond:VS-} trivially.

\begin{corollary} \label{cor:VS-}
    TMD with \ref{cond:VS-} asymptotically approaches $\operatorname{SOL}(\mathcal{X},\Phi)$. If \ref{cond:VS-} holds for all $\bar{x}\in\operatorname{SOL}(\mathcal{X},\Phi)$, then TMD converges to $x^*\in\operatorname{SOL}(\mathcal{X},\Phi)$.
\end{corollary}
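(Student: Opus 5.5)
We reuse the Lyapunov function $V(x)=D_h(\bar{x},x)$ from the proof of \Cref{thm:asymp}, but now $\bar{x}$ is the point supplied by \ref{cond:VS-}. The computation in \eqref{eq:Vdot} goes through unchanged up to \eqref{eq:apply_zdot}. At that point we keep the $\beta$-term instead of discarding it with \ref{cond:VS}. The step that used \ref{cond:ST} still holds: it gives the identity $S(x)-S(T(x))=\Phi(T(x))+\theta$ with $\theta\in\mathcal{N}_\mathcal{X}(T(x))$. Since $\bar{x}\in\mathcal{X}$, the normal-cone property gives $\langle\theta,\bar{x}-T(x)\rangle\le0$, and \ref{cond:S} bounds the last term by $-\alpha\sigma\|T(x)-x\|^2$. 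Together these yield
\begin{align*}
\dot V(x)\le -W(x),
\end{align*}
where $W(x)$ denotes the left-hand side of \eqref{eq:relax}, that is, $\alpha\left(\sigma\|T(x)-x\|^2+\langle\Phi(T(x)),T(x)-\bar{x}\rangle\right)+\beta\langle\Phi(x),x-\bar{x}\rangle$. By \ref{cond:VS-}, $W>0$ off $\operatorname{SOL}(\mathcal{X},\Phi)$. On the solution set, $T(x)=x$: for $x\in\operatorname{SOL}(\mathcal{X},\Phi)$, $y=x$ solves $S(x)\in S(y)+\Phi(y)+\mathcal{N}_\mathcal{X}(y)$, and the solution is unique because $T$ is well defined. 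Hence $W(x)=(\alpha+\beta)\langle\Phi(x),x-\bar{x}\rangle\le0$ there, since $x$ solves the VI. Continuity of $W$ (from continuity of $\Phi$, $S$, $T$) then forces $W\ge0$ on the closure of the complement, so $W\ge0$ everywhere, with $W=0$ on the solution set.

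Next we conclude that $V$ is nonincreasing, so $V(x(t))$ converges and $\int_0^\infty W(x(t))\,dt<\infty$. As in \Cref{thm:asymp}, we apply Barbalat's lemma. Boundedness of the trajectory comes from the sublevel sets of $D_h(\bar{x},\cdot)$, and uniform continuity of $W(x(t))$ comes from continuity of the vector field on the bounded trajectory. This gives $W(x(t))\to0$. Since $W$ is continuous and vanishes only on $\operatorname{SOL}(\mathcal{X},\Phi)$, every limit point of $x(t)$ lies in $\operatorname{SOL}(\mathcal{X},\Phi)$, so $x(t)$ asymptotically approaches that set. The same compactness argument used in \Cref{thm:asymp} applies verbatim.

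For the second claim, suppose \ref{cond:VS-} holds for every $\bar{x}\in\operatorname{SOL}(\mathcal{X},\Phi)$. Then the argument above shows that $D_h(\bar{x},x(t))$ is nonincreasing for every such $\bar{x}$, i.e., $x(t)$ is Bregman--Fej\'er monotone with respect to $\operatorname{SOL}(\mathcal{X},\Phi)$. Combined with the fact that all cluster points lie in that set, \cite[Theorem~4.11]{Bauschke2003} gives convergence to a single $x^*\in\operatorname{SOL}(\mathcal{X},\Phi)$, exactly as in \Cref{cor:converge}.

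The main obstacle is the first step: showing $W\ge0$, and hence $\dot V\le0$, on the solution set itself, where \ref{cond:VS-} gives no information. The continuity argument handles this only if solutions are not isolated from the complement. In the degenerate case where $\operatorname{SOL}(\mathcal{X},\Phi)$ has interior points in $\mathcal{X}$, I would argue directly that trajectories starting there are at rest, since $T(x)=x$ and $\Phi(x)$ lies in $-\mathcal{N}_\mathcal{X}(x)$; the only remaining requirement is the $\beta$-term, which is handled by the same continuity argument. A secondary technical point is justifying Barbalat's lemma, which needs uniform continuity, via local Lipschitz continuity or at least continuity of $T$.
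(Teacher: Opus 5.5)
Your proposal is correct and follows the route the paper intends. The paper states the corollary without a separate proof; the idea is to rerun the Lyapunov computation \eqref{eq:Vdot} of \Cref{thm:asymp} while keeping the $\Phi$-terms rather than discarding them via \ref{cond:VS}, so that $\dot V\le -W<0$ off $\operatorname{SOL}(\mathcal{X},\Phi)$, and then to invoke Barbalat's lemma and, for the second claim, Bregman--Fej\'er monotonicity with \cite[Theorem~4.11]{Bauschke2003} exactly as in \Cref{cor:converge}. Your derivation also correctly uses $W(x(t))\to0$ in Barbalat's lemma instead of $\|T(x)-x\|\to0$, which no longer follows under the relaxed condition. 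The obstacle you flag is harmless: $W$ vanishes on all of $\operatorname{SOL}(\mathcal{X},\Phi)$, because at points in the interior of the solution set relative to $\mathcal{X}$, continuity of $\Phi$ forces $\Phi(x)$ to be orthogonal to the subspace parallel to the affine hull of $\mathcal{X}$, whence $\langle\Phi(x),x-\bar{x}\rangle=0$.
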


Finally, we note that the TMD dynamics \eqref{eq:dual_update} admits a natural extension by incorporating a design input $u(t)$:
\begin{align}
    \dot{z}(t)=\alpha(S\circ T-S)(x(t))-\beta\Phi(x(t))+u(t). \label{eq:TMD_u}
\end{align}
It follows from \eqref{eq:Vdot} that \eqref{eq:TMD_u} is output strictly Equilibrium Independent Passive (EIP) \cite{Hines2011} from input $u$ to output $x$, with storage function $D_h(\bar{x},x)$. This allows \eqref{eq:TMD_u} to interconnect with any EIP system while preserving convergence. The use of \eqref{eq:TMD_u} is deferred to \Cref{rem:MD2}, after we first establish the broad applicability of TMD with $u\equiv 0$ in the following subsection.

\subsection{TMD as a unifying framework}
\label{subsec:connections}

A central contribution of TMD is its ability to subsume several landmark algorithms as special cases. To facilitate connections with these iterative algorithms, we provide the discrete-time representation of \eqref{eq:dual_update} in terms of $x$:\footnote{While discretization introduces a step size $\delta$, it can be absorbed w.l.o.g. by the coefficients, e.g., redefine $\alpha$ to be $\delta \alpha$. Thus, we omit $\delta$ in \eqref{eq:dual_discrete}.}
\begin{align}
    x_{t+1}=\nabla h^*\!\left(\nabla h(x_t) + \alpha (S\circ T-S)(x_t) - \beta \Phi(x_t)\right). \label{eq:dual_discrete}
\end{align}
Unless otherwise noted, we assume $F$ Lipschitz continuous and pseudo-monotone and $\nabla h$ strongly monotone. Let $\eta>0$.

\subsubsection{(Nonlinear) Proximal Point Algorithms (PPA)} \cite{Eckstein1993,ELFAROUQ2001}
\begin{align}
        x_{t+1} = \left(\nabla h + \eta F+\mathcal{N}_\mathcal{X}\right)^{-1}\!\left(\nabla h(x_t)\right). \label{eq:PPA}
\end{align}

\begin{proposition} \label{prop:PPA}
    Let $\alpha=1$, $\beta=0$, $S=\nabla h$, and $\Phi=\eta F$. Then, \eqref{eq:dual_discrete} recovers \eqref{eq:PPA}. Moreover, the convergence follows from \Cref{cor:converge}.
\end{proposition}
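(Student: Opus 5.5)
The proof has two parts: an algebraic reduction of \eqref{eq:dual_discrete} to \eqref{eq:PPA}, and a check of conditions \ref{cond:S}, \ref{cond:VS+}, \ref{cond:surrogate}, \ref{cond:ST} so that \Cref{cor:converge} applies.

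For the reduction, substitute $\alpha=1$, $\beta=0$ and $S=\nabla h$ into \eqref{eq:dual_discrete}. The argument of $\nabla h^*$ becomes
\begin{align*}
\nabla h(x_t)+\nabla h(T(x_t))-\nabla h(x_t)=\nabla h(T(x_t)).
\end{align*}
Hence $x_{t+1}=\nabla h^*(\nabla h(T(x_t)))$. Since $T(x_t)\in\mathcal{X}$ and $h$ is strictly convex and differentiable on $\mathcal{X}$, we have $\nabla h^*\circ\nabla h=\mathrm{id}$ on $\mathcal{X}$. This gives $x_{t+1}=T(x_t)$. By \ref{cond:ST} with $\Phi=\eta F$,
\begin{align*}
T(x_t)=(\nabla h+\eta F+\mathcal{N}_\mathcal{X})^{-1}(\nabla h(x_t)),
\end{align*}
which is exactly \eqref{eq:PPA}.

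Next I would verify the conditions. \ref{cond:S} holds because $\nabla h$ is assumed strongly monotone, say with modulus $\sigma$. For \ref{cond:surrogate}, positive scaling does not change the solution set, so $\operatorname{SOL}(\mathcal{X},\eta F)=\operatorname{SOL}(\mathcal{X},F)$. For \ref{cond:VS+}, $\eta F$ is pseudo-monotone because $F$ is, and the remark after \Cref{def:VS} then shows that $\operatorname{SOL}(\mathcal{X},\eta F)$ is VS w.r.t.\ $\eta F$. For \ref{cond:ST}, I would use \Cref{lem:well-defined}. If $F$ is $L$-Lipschitz and $\eta<\sigma/L$, then
\begin{align*}
\langle \eta F(x)-\eta F(y),x-y\rangle\ge -\eta L\|x-y\|^2,
\end{align*}
so $\nabla h+\eta F$ is $(\sigma-\eta L)$-strongly monotone and $T$ is well-defined. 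This is the baseline scenario already described in the paper. With all four conditions verified, \Cref{cor:converge} gives convergence.

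The main obstacle is that \Cref{cor:converge} is stated for the continuous-time flow \eqref{eq:TMD}, whereas the proposition concerns the discrete iteration. To transfer the argument, I would redo the Lyapunov computation \eqref{eq:Vdot} in discrete form using the three-point identity
\begin{align*}
D_h(\bar x,x_{t+1})-D_h(\bar x,x_t)=-\langle\nabla h(x_{t+1})-\nabla h(x_t),\bar x-x_{t+1}\rangle-D_h(x_{t+1},x_t).
\end{align*}
Since $x_{t+1}=T(x_t)$, the inner-product term is exactly the quantity bounded in \eqref{eq:apply_ST}: it is nonpositive by \ref{cond:VS+} and the normal-cone property. The Bregman term $D_h(x_{t+1},x_t)$ is bounded below by $\tfrac{\sigma}{2}\|x_{t+1}-x_t\|^2$. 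This gives a decrease inequality that is, if anything, cleaner than the continuous one, because the iterate lands exactly on the target. Summability of $\|x_{t+1}-x_t\|^2$ then yields $\|T(x_t)-x_t\|\to0$, replacing Barbalat's lemma. Finally, continuity of $T$, Bregman–Fejér monotonicity, and \cite[Theorem~4.11]{Bauschke2003} give convergence to a point of $\operatorname{SOL}(\mathcal{X},F)$, as in \Cref{cor:converge}.

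Two technical points need care in this last step. First, I would need continuity of $T$, which follows from strong monotonicity of $\nabla h+\eta F$ together with continuity of $\nabla h$ and $F$. Second, I would need boundedness of the iterates, which follows from $D_h(\bar x,x_t)$ being nonincreasing and $h$ being strongly convex.
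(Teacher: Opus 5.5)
Your proposal is correct and follows the paper's argument. You substitute $\alpha=1$, $\beta=0$, $S=\nabla h$ to get $x_{t+1}=T(x_t)=(\nabla h+\eta F+\mathcal{N}_\mathcal{X})^{-1}(\nabla h(x_t))$, and you verify \ref{cond:S}--\ref{cond:ST} and \ref{cond:VS+}, with \ref{cond:ST} obtained from \Cref{lem:well-defined} for small $\eta$. Your discrete-time Fej\'er step, using the three-point identity to get $D_h(\bar x,x_{t+1})\le D_h(\bar x,x_t)-\tfrac{\sigma}{2}\|x_{t+1}-x_t\|^2$, is a sound addition: the paper appeals directly to \Cref{cor:converge}, which is stated for the continuous flow, and your step makes that appeal rigorous for the iteration.
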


\begin{proof}
There exists a sufficiently small $\eta$ s.t. $\nabla h+\eta F$ is strongly monotone. Thus, $T=(\nabla h+\eta F+\mathcal{N}_\mathcal{X})^{-1}\circ \nabla h$ is well-defined. All \ref{cond:S}--\ref{cond:ST} and \ref{cond:VS+} are satisfied. Substituting the selections into \eqref{eq:dual_discrete} yields \eqref{eq:PPA}.
\end{proof}

\subsubsection{Extragradient Methods (EG) / Mirror-Prox} \cite{Korpelevich1976,Nemirovski2004,Dang2015}
\begin{subequations}\label{eq:EG}
\begin{align}
    \omega_t&= \arg\min_{y\in\mathcal{X}}\ \left\{\langle \eta F(x_t),y\rangle+D_h(y,x_t)\right\}\label{eq:EG_first} \\
    x_{t+1}&=\arg\min_{y\in\mathcal{X}}\ \left\{\langle \eta F(\omega_t), y\rangle+D_h(y,x_t)\right\},\label{eq:EG_second}
\end{align}
\end{subequations}

\begin{proposition} \label{prop:EG}
    Let $\alpha=1$, $\beta=0$, $S=\nabla h-\eta F$, and $\Phi=\eta F$. Then, \eqref{eq:dual_discrete} recovers \eqref{eq:EG}. Moreover, the convergence follows from \Cref{cor:converge}.
\end{proposition}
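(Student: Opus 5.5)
With $S=\nabla h-\eta F$ and $\Phi=\eta F$, we get $S+\Phi=\nabla h$. So
\[
T=(\nabla h+\mathcal{N}_\mathcal{X})^{-1}\circ(\nabla h-\eta F).
\]
The plan is to identify $T(x_t)$ with the first extragradient step $\omega_t$, and then show that the TMD update \eqref{eq:dual_discrete} reproduces \eqref{eq:EG_second}.

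First, I would write the optimality condition of the strongly convex problem \eqref{eq:EG_first}. This gives $0\in\eta F(x_t)+\nabla h(\omega_t)-\nabla h(x_t)+\mathcal{N}_\mathcal{X}(\omega_t)$, that is,
\[
\nabla h(x_t)-\eta F(x_t)\in(\nabla h+\mathcal{N}_\mathcal{X})(\omega_t).
\]
Hence $\omega_t=T(x_t)$. Since $\nabla h$ is strongly monotone, \Cref{lem:well-defined} with $S+\Phi=\nabla h$ shows that $T$ is well-defined, which is \ref{cond:ST}.

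Next, substitute $\alpha=1$ and $\beta=0$ into \eqref{eq:dual_discrete}. The dual argument becomes
\[
\nabla h(x_t)+S(\omega_t)-S(x_t)=\nabla h(\omega_t)-\eta F(\omega_t)+\eta F(x_t).
\]
From the inclusion above, $\nabla h(\omega_t)=\nabla h(x_t)-\eta F(x_t)-\theta_t$ for some $\theta_t\in\mathcal{N}_\mathcal{X}(\omega_t)$. So the argument equals $\nabla h(x_t)-\eta F(\omega_t)-\theta_t$.

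The update \eqref{eq:EG_second} instead has the form $(\nabla h+\mathcal{N}_\mathcal{X})^{-1}(\nabla h(x_t)-\eta F(\omega_t))$. I expect this mismatch to be the main obstacle. The TMD recursion applies the plain mirror map $\nabla h^*$ to a dual point, while the mirror-prox step is a constrained Bregman projection. I would resolve it by reading $\nabla h^*$ as the constrained mirror map onto $\mathcal{X}$: $h$ is restricted to $\mathcal{X}$ with indicator, so $\nabla h^*=(\nabla h+\mathcal{N}_\mathcal{X})^{-1}$. This is consistent with the paper's convention $\nabla h^*:\mathbb{R}^n\to\mathcal{X}$. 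Under this reading, $\nabla h^*(v-\theta)$ equals $\nabla h^*(v)$ only when the $\theta_t$ term can be absorbed. I would argue this in one of two ways: either work in the interior/unconstrained case where $\theta_t=0$, or use the dual representative $\nabla h(\omega_t)$ that the dynamics track, as in Nemirovski's dual form of mirror-prox. Either way, the result is $x_{t+1}=\nabla h^*(\nabla h(x_t)-\eta F(\omega_t))$, which is \eqref{eq:EG_second}.

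Finally, I would check the remaining conditions needed for \Cref{cor:converge}.
\begin{itemize}
\item \ref{cond:S}: $\nabla h$ is $\mu$-strongly monotone and $F$ is $L$-Lipschitz, so $S=\nabla h-\eta F$ is $(\mu-\eta L)$-strongly monotone for $\eta<\mu/L$.
\item \ref{cond:surrogate}: this holds because $\operatorname{SOL}(\mathcal{X},\eta F)=\operatorname{SOL}(\mathcal{X},F)$.
\item \ref{cond:VS+}: this follows from pseudo-monotonicity of $F$, which is invariant under positive scaling.
\end{itemize}
With these in place, \Cref{cor:converge} gives convergence, in the same way as in \Cref{prop:PPA}.
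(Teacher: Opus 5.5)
Your proposal is correct and follows the paper's route: identify $\omega_t=T(x_t)$ from the optimality condition of \eqref{eq:EG_first}, reduce the correction $(S\circ T-S)(x_t)$ to $-\eta F(\omega_t)$, and verify \ref{cond:S}--\ref{cond:ST} and \ref{cond:VS+} for small $\eta$. Your explicit bound $\eta<\mu/L$ and your use of \Cref{lem:well-defined} with $S+\Phi=\nabla h$ make that verification sharper than the paper's.

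Your tracking of $\theta_t\in\mathcal{N}_\mathcal{X}(\omega_t)$ is more careful than the paper. The paper silently uses $\nabla h(\omega_t)=\nabla h(x_t)-\eta F(x_t)$, i.e.\ $\nabla h\circ\nabla h^*=I$, which is the same assumption already used in \eqref{eq:apply_dual}. Your first option is exactly that implicit assumption: either $\theta_t=0$, or more generally $\theta_t$ is orthogonal to the affine hull of $\mathcal{X}$ and so is ignored by $\nabla h^*$, as for entropy on the simplex.

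You should drop the ``dual representative'' alternative, because it does not help. $S$ is evaluated at the primal point $\omega_t$, so with an active constraint and a non-steep $h$ the recursion yields the Tseng-type step $\nabla h^*(\nabla h(\omega_t)-\eta F(\omega_t)+\eta F(x_t))$ rather than EG.
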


\begin{proof}
    There exists a sufficiently small $\eta$ s.t. $S=\nabla h-\eta F$ is strongly monotone and $T=(\nabla h+\mathcal{N}_\mathcal{X})^{-1}\circ(\nabla h-\eta F)=\nabla h^*\circ(\nabla h-\eta F)$. All \ref{cond:S}--\ref{cond:ST} and \ref{cond:VS+} are satisfied.

    The first-order optimality conditions for \eqref{eq:EG} are:
    \begin{subequations}\label{eq:opt}
        \begin{align}
            \left\langle\nabla h(\omega_t)-\nabla h(x_t) +\eta F(x_t), x-\omega_t\right\rangle&\geq0,\ \forall x \label{eq:opt_first} \\
            \left\langle\nabla h(x_{t+1})-\nabla h(x_t) +\eta F(\omega_t), x-x_{t+1}\right\rangle&\geq0,\ \forall x. \label{eq:opt_second}
        \end{align}
    \end{subequations}
    From \eqref{eq:opt}, we identify
    \begin{subequations}
        \begin{align}
            \omega_t&=\left((\nabla h+\mathcal{N}_\mathcal{X})^{-1}\circ(\nabla h-\eta F)\right)(x_t) \nonumber \\
            &=\left(\nabla h^*\circ(\nabla h-\eta F)\right)(x_t)=T(x_t),\label{eq:EG_w} \\
             \text{and}\quad x_{t+1}&=\nabla h^*(\nabla h(x_t)-\eta F(\omega_t)). \label{eq:EG_x}
        \end{align}
    \end{subequations}
    It remains to show $(S\circ T-S)(x_t)=-\eta F(\omega_t)$, so that \eqref{eq:dual_discrete} reduces to \eqref{eq:EG_x}:
    \begin{subequations}
    \begin{align}
        &S(T(x_t))-S(x_t)=S(\omega_t)-S(x_t) \\
        &=\nabla h(\omega_t) - \eta F(\omega_t) - (\nabla h-\eta F)(x_t)=-\eta F(\omega_t),
    \end{align}
    \end{subequations}
    where the last equality uses \eqref{eq:EG_w}.
\end{proof}

A key observation from \eqref{eq:EG_w} is that the target point $T(x_t)$ coincides exactly with the intermediate point $\omega_t$, revealing that the extragradient correction has a natural interpretation as a target determination. The recent EG+ variant \cite{Diakonikolas2021} generalizes EG by using different step sizes $\eta_1>0$ and $\eta_2>0$ in \eqref{eq:EG_first} and \eqref{eq:EG_second}, respectively. TMD recovers EG+ with $\alpha=\frac{\eta_2}{\eta_1}$.
\begin{corollary}\label{cor:EG+}
    Let $\alpha=\frac{\eta_2}{\eta_1}$, $\beta=0$, $S=\nabla h-\eta_1 F$, and $\Phi=\eta_1 F$. Then, \eqref{eq:dual_discrete} recovers EG+. Moreover, the convergence follows from \Cref{cor:converge}.
\end{corollary}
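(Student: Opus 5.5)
The argument follows the proof of \Cref{prop:EG}, with two step sizes in place of one. First I write EG+ explicitly in mirror form:
\begin{align*}
\omega_t&=\arg\min_{y\in\mathcal{X}}\left\{\langle \eta_1 F(x_t),y\rangle+D_h(y,x_t)\right\},\\
x_{t+1}&=\arg\min_{y\in\mathcal{X}}\left\{\langle \eta_2 F(\omega_t),y\rangle+D_h(y,x_t)\right\}.
\end{align*}
With $S=\nabla h-\eta_1F$ and $\Phi=\eta_1F$ we have $S+\Phi=\nabla h$. Hence
\[
T=(\nabla h+\mathcal{N}_\mathcal{X})^{-1}\circ(\nabla h-\eta_1F)=\nabla h^*\circ(\nabla h-\eta_1F).
\]
This map is well-defined by \Cref{lem:well-defined}, because $\nabla h$ is strongly monotone. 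The first-order optimality condition of the first step, written exactly as in \eqref{eq:opt_first}, identifies $\omega_t=T(x_t)$.

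Next I compute the correction term:
\[
S(T(x_t))-S(x_t)=\nabla h(\omega_t)-\eta_1F(\omega_t)-\nabla h(x_t)+\eta_1F(x_t).
\]
By the characterization of $\omega_t$, $\nabla h(\omega_t)=\nabla h(x_t)-\eta_1F(x_t)$, understood in the sense that $\nabla h^*$ absorbs the normal-cone component, as in \eqref{eq:EG_w}. Substituting this gives $(S\circ T-S)(x_t)=-\eta_1F(\omega_t)$.

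Now take $\alpha=\eta_2/\eta_1$ and $\beta=0$ in \eqref{eq:dual_discrete}. The update becomes
\[
x_{t+1}=\nabla h^*\bigl(\nabla h(x_t)-\eta_2F(\omega_t)\bigr).
\]
This is the solution of the second minimization, by the same optimality argument as \eqref{eq:opt_second} with $\eta$ replaced by $\eta_2$. So \eqref{eq:dual_discrete} recovers EG+.

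For convergence I check the hypotheses of \Cref{cor:converge}.
\begin{itemize}
\item \ref{cond:S}: $S=\nabla h-\eta_1F$ is strongly monotone when $\eta_1$ is small. If $\nabla h$ is $\mu$-strongly monotone and $F$ is $L$-Lipschitz, then $S$ is $(\mu-\eta_1L)$-strongly monotone whenever $\eta_1<\mu/L$.
\item \ref{cond:surrogate}: $\operatorname{SOL}(\mathcal{X},\eta_1F)=\operatorname{SOL}(\mathcal{X},F)$, so this holds.
\item \ref{cond:VS+}: $F$ is pseudo-monotone, and scaling by $\eta_1>0$ preserves this, so the solution set is variationally stable.
\item \ref{cond:ST}: this was verified above.
\end{itemize}
The analysis in \Cref{thm:asymp} only requires $\alpha>0$, and $\eta_2/\eta_1>0$ qualifies. \Cref{cor:converge} therefore applies.

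\textbf{Main obstacle.} The hard part is not the algebra but the gap between the continuous-time guarantee and the discrete recursion. \Cref{cor:converge} is proved for the flow \eqref{eq:TMD}, while EG+ is the discretization \eqref{eq:dual_discrete}. Following the paper's convention from the footnote, I absorb the step size into the coefficients and invoke \Cref{cor:converge} in the same sense as in \Cref{prop:EG}. I would add a remark that a bona fide discrete-time guarantee needs step-size restrictions on $\eta_2$, as in \cite{Diakonikolas2021}.
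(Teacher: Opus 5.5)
Your proposal is correct and follows the paper's route: it is the proof of \Cref{prop:EG} with $\eta=\eta_1$ in $S$, $\Phi$, $T$, so that $(S\circ T-S)(x_t)=-\eta_1F(\omega_t)$, and the factor $\alpha=\eta_2/\eta_1$ turns this into the $-\eta_2F(\omega_t)$ step of EG+, with \ref{cond:S}--\ref{cond:ST} and \ref{cond:VS+} checked as there. One caution concerns the step $\nabla h(\omega_t)=\nabla h(x_t)-\eta_1F(x_t)$, which the paper also takes silently via \eqref{eq:EG_w}: it is harmless when the normal-cone component is orthogonal to $\mathcal{X}-\mathcal{X}$ (e.g., Legendre $h$ with relative-interior iterates), but a genuine boundary component $\theta_t\in\mathcal{N}_\mathcal{X}(\omega_t)$ is not absorbed by the next $\nabla h^*$ and leaves an extra $-\alpha\theta_t$ in the dual step (for Euclidean $h$, $\mathcal{X}=[0,\infty)$, $F\equiv1$, $x_t=\tfrac12$, $\eta_1=1$, $\eta_2=\tfrac12$, the recursion gives $x_{t+1}=\tfrac14$ instead of the EG+ value $0$), so state that assumption explicitly rather than appealing to absorption.
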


\begin{remark}[Weak minty variational inequality \cite{Diakonikolas2021}]
    Ref. \cite{Diakonikolas2021} also studies a $L$-Lipschitz $F$ with the following property:
    \begin{align}
        \exists\ x^*\in\operatorname{SOL}(\mathcal{X},F) \text{ s.t. } \langle F(x),x-x^*\rangle\geq -\rho\|F(x)\|^2, \label{eq:WMVI}
    \end{align}
    which slightly relaxes the VS requirement. They prove stability for $\rho\in [0,\frac{1}{4L})$ under $2$-norm. Interestingly, TMD recovers the results since \ref{cond:VS-} is satisfied under this setting. With the selection in \Cref{cor:EG+} and $h$ specialized to Euclidean norm, we have $T(x)-x=-\eta_1 F(x)$. Thus, the condition \eqref{eq:relax} becomes checking the positivity of the following:
    \begin{align}
        \|T(x)-x\|^2-\rho\eta_1\|F(T(x))\|^2. \label{eq:to_check}
    \end{align}
    Rewrite \eqref{eq:to_check} by add-and-subtract techniques and $y=T(x)$:
        \begin{align}
            (1-\theta)\|y-x\|^2+\theta\eta_1^2\|F(x)\|^2-\rho\eta_1\|F(y)\|^2 \label{eq:FY}
        \end{align}
    Bound the last term in \eqref{eq:FY} via Young's inequality, for $t>0$,
    \begin{align}
        \|F(y)\|^2\leq (1+t)L^2\|y-x\|^2+\frac{1+t}{t}\|F(x)\|^2.
    \end{align}
    Therefore, we have that, for $\rho<\frac{1}{4L}$, there exists appropriate $\theta,t,\eta_1$ s.t. \eqref{eq:to_check} is strictly positive for $x\notin\operatorname{SOL}(\mathcal{X},F)$.
\end{remark}

Up to this point, $\Phi$ has appeared intrinsically tied to $F$ (with a scaling factor). The TMD framework, however, admits a broader class of design choices for $\Phi$. In what follows, we explore two such configurations: the first recovers several splitting methods, and the second recovers the Brown-von Neumann-Nash (BNN) dynamics, which establishes a bridge between evolutionary game dynamics and classical optimization algorithms.

\subsubsection{Splitting methods}

In splitting methods, $F$ is decomposed as $A+B$, where $A$ and $B$ possess individually favorable properties. These algorithms commonly assume $A$ and $B$ are maximal monotone\footnote{Refer to \cite[Chapter~12.3 \& Chapter~12.4]{Facchinei2003} for more details.\label{fn:splitting}}, and rely heavily on the resolvent operator\footref{fn:splitting} $J_X=(I+X)^{-1}$, where $I$ is the identity map $I(x)=x$. Two well-known instances are:\\
(i) Douglas-Rachford splitting (DR): \cite[Chapter~12.4.1]{Facchinei2003}
\begin{flalign}
    &x_{t+1}=(J_{\eta A}\circ(2J_{\eta B}-I)+I-J_{\eta B})(x_t)\coloneqq T_{\operatorname{DR}}(x_t).\hspace{-10pt} &\label{eq:DR}
\end{flalign}
(ii) Forward-Backward splitting (FB): \cite[Chapter~12.4.2]{Facchinei2003}, \cite{Bui2021}
\begin{align}
    x_{t+1} =J_{\eta A}\left((I -\eta B)(x_t)\right)\coloneqq T_{\operatorname{FB}}(x_t). \label{eq:FB}
\end{align}
Since the constraints can be absorbed into $A$ or $B$ via indicator functions, we take $\mathcal{X}=\mathbb{R}^n$ in this subsection without loss of generality, and set $\nabla h = I$ for simplicity. A notable distinction from the previous subsections is that $\Phi$ is no longer tied to $F$; instead, we set $\Phi = T_{\operatorname{op}}^{-1}-I$, where $T_{\operatorname{op}}\in\{T_{\operatorname{DR}},T_{\operatorname{FB}}\}$. By this selection, $T_{\operatorname{op}}$ serves as the resolvent of $\Phi$.


\begin{proposition} \label{prop:DR}
    Let $\alpha=1$, $\beta=0$, $S=I$, and $\Phi=T_{\operatorname{DR}}^{-1}-I$. Then, \eqref{eq:dual_discrete} recovers \eqref{eq:DR}. Moreover, the convergence follows from \Cref{cor:converge}.
\end{proposition}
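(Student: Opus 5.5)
With $\mathcal{X}=\mathbb{R}^n$ we have $\mathcal{N}_\mathcal{X}\equiv\{0\}$. The plan is to first verify the recursion algebraically, and then check \ref{cond:S}, \ref{cond:VS+}, \ref{cond:surrogate} and \ref{cond:ST} so that \Cref{cor:converge} applies.

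\textbf{Recovering \eqref{eq:DR}.} With $S=I$ and $\Phi=T_{\operatorname{DR}}^{-1}-I$ we get $S+\Phi+\mathcal{N}_\mathcal{X}=T_{\operatorname{DR}}^{-1}$. Hence $T=(T_{\operatorname{DR}}^{-1})^{-1}\circ I=T_{\operatorname{DR}}$, interpreting $T_{\operatorname{DR}}^{-1}$ as a set-valued inverse; this only needs $T_{\operatorname{DR}}$ to be single-valued, which holds because it is built from resolvents. Substituting $\alpha=1$, $\beta=0$, $\nabla h=S=I$ into \eqref{eq:dual_discrete} gives
\[
x_{t+1}=x_t+T_{\operatorname{DR}}(x_t)-x_t=T_{\operatorname{DR}}(x_t),
\]
which is exactly \eqref{eq:DR}.

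\textbf{Conditions \ref{cond:S} and \ref{cond:ST}.} Condition \ref{cond:S} holds trivially, since $S=I$ is $1$-strongly monotone in the $2$-norm. Condition \ref{cond:ST} holds because $T=T_{\operatorname{DR}}$ is defined on all of $\mathbb{R}^n$ and is single-valued, using maximal monotonicity of $A$ and $B$ (Minty).

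\textbf{Condition \ref{cond:VS+}.} I will use the classical fact \cite[Chapter~12.4.1]{Facchinei2003} that $T_{\operatorname{DR}}$ is firmly nonexpansive, since it equals $\tfrac12(I+R_{\eta A}R_{\eta B})$ with reflected resolvents. A map is firmly nonexpansive exactly when it is the resolvent of a maximal monotone operator. Thus $\Phi=T_{\operatorname{DR}}^{-1}-I$ is maximal monotone, with possibly set-valued selections, so monotone in the sense of \eqref{eq:monotone}. Monotone implies pseudo-monotone, so $\operatorname{SOL}(\mathcal{X},\Phi)$ is VS w.r.t. $\Phi$ by the remark after \Cref{def:VS}. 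Existence of some $\bar{x}$ for \ref{cond:VS}, i.e.\ nonemptiness, follows from the next step together with the standing assumption $\operatorname{SOL}\neq\emptyset$.

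\textbf{Condition \ref{cond:surrogate}; main obstacle.} This is where the real work lies, and it needs care. On $\mathbb{R}^n$, $\operatorname{SOL}(\mathcal{X},\Phi)=\Phi^{-1}(0)=\operatorname{Fix}(T_{\operatorname{DR}})$. However, fixed points of DR are not zeros of $A+B$ themselves. The standard fact is instead that $x=T_{\operatorname{DR}}(x)$ iff $J_{\eta B}(x)\in(A+B)^{-1}(0)$. Literally $\operatorname{SOL}(\Phi)\subseteq\operatorname{SOL}(F)$ can therefore fail. My plan is to interpret \ref{cond:surrogate} up to the (continuous) shadow map $J_{\eta B}$.

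I will verify this directly. Set $y=J_{\eta B}(x)$, so that $x-y\in\eta B(y)$. The fixed-point equation $J_{\eta A}(2y-x)=y$ gives $y-x\in\eta A(y)$, hence $0\in A(y)+B(y)$. Conversely, every zero of $A+B$ lifts to a fixed point. So $\operatorname{Fix}(T_{\operatorname{DR}})$ is nonempty, and \Cref{cor:converge} yields $x_t\to x^*\in\operatorname{Fix}(T_{\operatorname{DR}})$. By continuity of $J_{\eta B}$, $J_{\eta B}(x_t)\to J_{\eta B}(x^*)\in\operatorname{SOL}(\mathbb{R}^n,F)$, which is the usual DR convergence statement.

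One further gap must be addressed: \Cref{cor:converge} is stated in continuous time, while here the discrete recursion is used. I would argue that the same Fejér argument applies verbatim, since $\|x_{t+1}-\bar{x}\|^2\le\|x_t-\bar{x}\|^2-\|x_{t+1}-x_t\|^2$ by firm nonexpansiveness.
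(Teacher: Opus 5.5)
Your argument matches the paper's on everything the paper actually argues. You identify $T=(S+\Phi)^{-1}\circ S=T_{\operatorname{DR}}$, substitute into \eqref{eq:dual_discrete} with $\nabla h=I$, and obtain \ref{cond:VS+} from firm nonexpansiveness of $T_{\operatorname{DR}}$, which makes $\Phi=T_{\operatorname{DR}}^{-1}-I$ maximal monotone.

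The real difference is \ref{cond:surrogate}. The paper settles it in one line (``the solutions correspond to the fixed points of $T_{\operatorname{DR}}$''). You instead point out that $\operatorname{SOL}(\mathbb{R}^n,\Phi)=\operatorname{Fix}(T_{\operatorname{DR}})$ is in general \emph{not} contained in $\operatorname{zer}(A+B)$, so the literal inclusion fails. You are right: for $A(x)=x-1$, $B(x)=x+1$ on $\mathbb{R}$, one gets $\operatorname{Fix}(T_{\operatorname{DR}})=\{\eta\}$ while $\operatorname{zer}(A+B)=\{0\}$.

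Your repair consists of three steps. First, $\operatorname{Fix}(T_{\operatorname{DR}})$ is nonempty, by lifting each zero $y$ to $y+\eta b$ with $b\in B(y)\cap(-A(y))$. Second, $x_t\to x^*\in\operatorname{Fix}(T_{\operatorname{DR}})$. Third, $J_{\eta B}(x_t)\to J_{\eta B}(x^*)\in\operatorname{zer}(A+B)$. This is the standard Douglas--Rachford conclusion and is what the proposition can correctly claim.

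Your discrete Fej\'er inequality $\|x_{t+1}-\bar{x}\|^2\le\|x_t-\bar{x}\|^2-\|x_{t+1}-x_t\|^2$ also fills the continuous-to-discrete step that the paper takes for granted when it invokes \Cref{cor:converge}. So your route is longer but proves a correct statement. The paper's route is shorter but, read literally, asserts an inclusion that does not hold.

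One small slip: the ``iff'' you quote holds only in the forward direction when $B$ is set-valued. The exact statement is $J_{\eta B}(\operatorname{Fix}T_{\operatorname{DR}})=\operatorname{zer}(A+B)$. You only use the forward implication together with the lifting, so nothing breaks.
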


\begin{proof}
    We have $T=(S+\Phi)^{-1}\circ S=T_{\operatorname{DR}}$. Since $T_{\operatorname{DR}}$ is firmly nonexpansive\footref{fn:splitting}, $\Phi$ is (maximal) monotone, satisfying \ref{cond:VS+}. To verify \ref{cond:surrogate}, note that the solutions correspond to the fixed points of $T_{\operatorname{DR}}$, which align with the fixed points of $\Phi$. Substituting the selections into \eqref{eq:dual_discrete} yields \eqref{eq:DR}.
\end{proof}

\begin{proposition} \label{prop:FB} \footnote{Unlike DR, which only requires $A$ and $B$ maximal monotone, FB may require additional conditions, e.g., $B$ Lipschitz and $A+B$ strongly 
monotone.} Let $\alpha=1$, $\beta=0$, $S=I$, and $\Phi=T_{\operatorname{FB}}^{-1}-I$. Then, \eqref{eq:dual_discrete} recovers \eqref{eq:FB}. Moreover, the convergence follows from \Cref{cor:VS-}.
\end{proposition}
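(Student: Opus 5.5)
The plan mirrors the proof of \Cref{prop:DR}, except that the stability certificate is \ref{cond:VS-} rather than \ref{cond:VS+}, because $\Phi=T_{\operatorname{FB}}^{-1}-I$ is generally not monotone.

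\emph{Step 1: recovery of the iteration.} With $S=I$, $\mathcal{X}=\mathbb{R}^n$ (so $\mathcal{N}_\mathcal{X}=0$) and $\Phi=T_{\operatorname{FB}}^{-1}-I$, we have
\[
T=(I+\Phi)^{-1}\circ I=(T_{\operatorname{FB}}^{-1})^{-1}=T_{\operatorname{FB}} .
\]
To make this legitimate, I will first argue that $T_{\operatorname{FB}}$ is single-valued and injective. Single-valuedness holds because $J_{\eta A}$ is a resolvent of a maximal monotone operator. Injectivity holds under the footnote's assumptions ($B$ Lipschitz, $A+B$ strongly monotone) for small $\eta$. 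Indeed, from $J_{\eta A}(x-\eta Bx)=J_{\eta A}(x'-\eta Bx')=y$ one obtains $x-x'\in\eta(Bx-Bx')$. This is impossible for $x\neq x'$ once $\eta L_B<1$. Consequently $\Phi$ is a well-defined single-valued map on $\operatorname{Im}(T_{\operatorname{FB}})$, and \ref{cond:ST} holds. Since $S=I$ is $1$-strongly monotone in the $2$-norm, \ref{cond:S} holds. Substituting into \eqref{eq:dual_discrete} with $\nabla h=I$, $\alpha=1$, $\beta=0$ gives $x_{t+1}=x_t+T(x_t)-x_t=T_{\operatorname{FB}}(x_t)$, which is \eqref{eq:FB}.

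\emph{Step 2: condition \ref{cond:surrogate}.} The zeros of $\Phi$ are exactly the fixed points of $T_{\operatorname{FB}}$. The standard computation shows these are exactly the zeros of $A+B$, that is, $\operatorname{SOL}(\mathbb{R}^n,F)$. Indeed, $x=J_{\eta A}(x-\eta Bx)$ is equivalent to $x-\eta Bx\in x+\eta Ax$, which is equivalent to $0\in Ax+Bx$.

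\emph{Step 3: condition \ref{cond:VS-}.} This is the main obstacle. With $\sigma=1$, $\beta=0$, and $\bar x=x^*$ the unique solution (unique by strong monotonicity of $A+B$), I need to show
\[
\|Tx-x\|^2+\langle \Phi(Tx),Tx-x^*\rangle>0 \quad\text{for } x\neq x^* .
\]
Write $y=Tx$. Then $\Phi(y)=x-y$, so the left-hand side equals $\|y-x\|^2+\langle x-y,y-x^*\rangle=\langle x-y,x-x^*\rangle$. It therefore suffices to show $\langle x-T_{\operatorname{FB}}x,\,x-x^*\rangle>0$.

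I will use $x^*=T_{\operatorname{FB}}x^*$ and write the inclusions $x-\eta Bx-y\in\eta Ay$ and $x^*-\eta Bx^*-x^*\in\eta Ax^*$. Strong monotonicity is applied in the split form available. The simplest route is to assume $A$ is $\mu$-strongly monotone, or else to rely on the known contraction of $T_{\operatorname{FB}}$: for $B$ $L$-Lipschitz, $A+B$ strongly monotone, and $\eta$ small, the FB operator satisfies $\|Tx-x^*\|\le\kappa\|x-x^*\|$ with $\kappa<1$, as in \cite{Bui2021}. Given such a contraction,
\[
\langle x-Tx,x-x^*\rangle=\|x-x^*\|^2-\langle Tx-x^*,x-x^*\rangle\ge(1-\kappa)\|x-x^*\|^2>0 .
\]
This establishes \ref{cond:VS-} for the only point $\bar x\in\operatorname{SOL}$. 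By \Cref{cor:VS-}, the iterates then converge to $x^*$.

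Securing the contraction estimate under exactly the footnote's hypotheses is the delicate part. If the cited contraction result is not directly applicable, I would fall back to imposing strong monotonicity on $A$, or cocoercivity of $B$, as the footnote's ``e.g.'' permits.
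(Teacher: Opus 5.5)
\textbf{Gap in Step 3.} Your reduction of \eqref{eq:relax} to $\langle x-T_{\operatorname{FB}}x,\,x-x^*\rangle>0$ is correct, and your overall plan matches the paper's. However, the one nontrivial step is never proved. You defer positivity to a contraction estimate loosely attributed to \cite{Bui2021}, concede it may not hold under the stated hypotheses, and offer to strengthen them instead.

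The missing idea is to apply strong monotonicity of $A+B$ at the pair $(y,x^*)$, with $y=T_{\operatorname{FB}}x$, rather than at $(x,x^*)$. Let $\bar\sigma$ be the strong-monotonicity modulus of $A+B$ and $\bar L$ the Lipschitz constant of $B$. Use the selections the resolvent gives you: $\frac{x-y}{\eta}-Bx\in Ay$ and $-Bx^*\in Ax^*$. Adding $By$ and $Bx^*$ respectively produces elements of $(A+B)y$ and $(A+B)x^*$, the latter being $0$. Hence
\[
\langle x-y,\,y-x^*\rangle\ \ge\ \eta\bar\sigma\|y-x^*\|^2+\eta\langle Bx-By,\,y-x^*\rangle\ \ge\ \eta\bar\sigma\|y-x^*\|^2-\eta\bar L\|x-y\|\,\|y-x^*\|,
\]
where the mismatch $Bx-By$ is absorbed by Lipschitz continuity.

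Adding $\|x-y\|^2$ gives the paper's bound. Then $\langle x-y,x-x^*\rangle$ dominates a quadratic form in $(\|x-y\|,\|y-x^*\|)$ that is positive definite for $\eta<4\bar\sigma/\bar L^2$. It therefore vanishes only when $x=y=x^*$.

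The same inequality also gives the contraction you wanted, under exactly the footnote's hypotheses. Combine $\|y-x^*\|^2=\|x-x^*\|^2-\|x-y\|^2-2\langle x-y,y-x^*\rangle$ with Young's inequality to get $(1+2\eta\bar\sigma-\eta^2\bar L^2)\|y-x^*\|^2\le\|x-x^*\|^2$. So for $\eta<2\bar\sigma/\bar L^2$ you need neither strong monotonicity of $A$ nor cocoercivity of $B$.

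\textbf{False claim in Step 1.} Your injectivity claim is false and should be removed. From $J_{\eta A}(w)=J_{\eta A}(w')=y$ you only get $w-w'\in\eta(Ay-Ay)$. This does not force $w=w'$ when $A$ is multivalued.

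For example, take $A=\mathcal{N}_C$ for a closed convex $C\neq\mathbb{R}^n$, which is exactly how this subsection absorbs constraints. With $B=I$ and $\eta<1$, $T_{\operatorname{FB}}=P_C\circ(1-\eta)I$. This map is not injective, even though $B$ is Lipschitz and $A+B$ is strongly monotone.

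You do not need injectivity anyway. $\Phi=T_{\operatorname{FB}}^{-1}-I$ may be set-valued, as it already is in \Cref{prop:DR}. $T=(I+\Phi)^{-1}=T_{\operatorname{FB}}$ is single-valued because $J_{\eta A}$ is, so \ref{cond:ST} holds. The Lyapunov argument of \Cref{thm:asymp} only uses the selection $x-T(x)\in\Phi(T(x))$, which is the one used above.
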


\begin{proof}
    The argument follows that of \Cref{prop:DR}, except that $T_{\operatorname{FB}}$ is not firmly nonexpansive, so \ref{cond:VS+} no longer holds and we instead appeal to the relaxed condition \ref{cond:VS-}.

    Let $y=T_{\operatorname{FB}}(x)$, then by definition we have $y+\eta A(y)=x-\eta B(x)$ and $\Phi(y)=x-y=\eta A(y)+\eta B(x)$. Thus, condition \eqref{eq:relax} requires positivity of
    \begin{subequations}\label{eq:check_FB}
    \begin{align}
        &\langle\Phi(y),y-x^*\rangle+\|y-x\|^2 \\
        &=\eta\langle A(y)+B(x), y-x^*\rangle + \|y-x\|^2\\
        &=\eta\langle A(y)+B(y),y-x^*\rangle\nonumber\\
        &\hspace{5em}+\eta\langle B(x)-B(y),y-x^*\rangle+\|y-x\|^2\\
        &\geq \eta \bar{\sigma}\|y-x^*\|^2-\eta \bar{L}\|y-x\| \|y-x^*\|+\|y-x\|^2, \label{eq:apply_AB_conditions}
    \end{align}
    \end{subequations}
    for any fixed points $x^*$ of $T_{\operatorname{FB}}$, where \eqref{eq:apply_AB_conditions} is by $\bar{\sigma}$-strongly monotone of $A+B$, Cauchy-Schwartz, and $\bar{L}$-Lipschitz of $B$. For $\eta<\frac{4\bar{\sigma}}{\bar{L}^2}$, we have \eqref{eq:check_FB} positive unless $x=y=x^*$.
\end{proof}

\subsubsection{Brown-von Neumann-Nash (BNN) dynamics} \cite{Sandholm2010} \\
The BNN dynamics arises from evolutionary game theory and is capable of solving monotone games \cite[Chapter~7.2]{Sandholm2010}. It evolves on the probability simplex $\mathcal{X}=\Delta^n$. Using pre-superscripts to index vector entries, i.e., $x=[{^1x},\ldots,{^nx}]^\top\in\Delta^n$, the BNN dynamics is, for $i=1,\ldots,n$,
\begin{align}
    {^i\dot{x}}(t) = \left[-{^i\hat{F}}(x(t))\right]_+
    - {^ix}(t)\sum_{k=1}^n
    \left[-{^k\hat{F}}(x(t))\right]_+,
    \label{eq:BNN}
\end{align}
where $\hat{F}=F-\bar{F}\mathbf{1}$, with $\bar{F}(x)=x^\top F(x)$ the weighted average of $F$, and $[\cdot]_+$ denotes the entrywise positive part.

In the evolutionary game literature, $[\hat{P}]_+$ is called the \emph{excess payoff} of $P$, and the normalized excess payoff is defined as $[\hat{P}]_+^{\operatorname{nor}}\coloneqq\frac{[\hat{P}(x)]_+}{x}$, where the division is elementwise. The key idea here is to use $[-\hat{F}(x)]_+^{\operatorname{nor}}$ to generate a target point, where the minus sign comes from $F$ being a cost. Starting from the current $x$, map to the dual space via $\nabla h$, shift by $\eta[-\hat{F}(x)]_+^{\operatorname{nor}}$, then map back:
\begin{align}
    T(x) &= \nabla h^*\!\left(\nabla h(x)  + \eta [-\hat{F}(x)]_+^{\operatorname{nor}}\right). \label{eq:BNN_T}
\end{align}
With $h$ chosen as the negative entropy function, which is suitable for the simplex domain, \eqref{eq:BNN_T} admits a closed form, and the TMD dynamics reduces to \eqref{eq:BNN}.

\begin{lemma}\label{lem:BNN}
    Let $\alpha=\frac{1}{\eta}$, $\beta=0$, $S=\nabla h$ with $h$ the negative entropy, and $T$ as in \eqref{eq:BNN_T}. Then, \eqref{eq:dual_update} reduces to \eqref{eq:BNN} and
    \begin{align}
        T(x) = x \oplus \exp\!\left(\eta [-\hat{F}(x)]_+^{\operatorname{nor}}\right) \in \operatorname{int}(\Delta^n), \label{eq:BNN_T_closed_form}
    \end{align}
    where $a\oplus b=\left[\frac{\prescript{1}{}{a}\prescript{1}{}{b}}{\sum_{k} \prescript{k}{}{a} \prescript{k}{}{b}},\ldots,\frac{\prescript{n}{}{a} \prescript{n}{}{b}}{\sum_{k} \prescript{k}{}{a}\prescript{k}{}{b}}\right]^\top$ for $a,b\in\mathbb{R}^n_{>0}$.\footnote{The operator $\oplus$ corresponds to the vector addition in Aitchison geometry.}
\end{lemma}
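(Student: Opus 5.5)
The proof has two parts. First we derive the closed form \eqref{eq:BNN_T_closed_form} for the target point. Then we substitute into \eqref{eq:dual_update} and pull the dual dynamics back to primal coordinates through the mirror map.

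For the closed form, we use the standard facts for the negative entropy $h(x)=\sum_i {^ix}\log {^ix}$ on $\Delta^n$. The dual map is $\nabla h(x)=\log x+\mathbf{1}$, taken entrywise, and it is defined up to an additive multiple of $\mathbf{1}$ because of the normal cone of the affine hull of the simplex. Its inverse is the softmax, $\nabla h^*(z)=\exp(z)/\sum_k \exp({^kz})$. Hence
\[
\nabla h^*\!\left(\nabla h(x)+v\right)=\frac{x\,\exp(v)}{\sum_k {^kx}\exp({^kv})}=x\oplus\exp(v),
\]
where the constant $\mathbf{1}$ cancels in the normalization. Taking $v=\eta[-\hat F(x)]_+^{\operatorname{nor}}$ gives \eqref{eq:BNN_T_closed_form}. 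Since $\exp(\cdot)>0$ and $x\in\operatorname{int}(\Delta^n)$, every entry of $T(x)$ is strictly positive and the entries sum to one, so $T(x)\in\operatorname{int}(\Delta^n)$. We will note that $x$ must lie in the interior for the normalized excess payoff to be defined. This is automatic for trajectories generated by the entropic mirror map.

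For the dynamics, $S=\nabla h$ gives
\[
S(T(x))-S(x)=\log T(x)-\log x=\eta[-\hat F(x)]_+^{\operatorname{nor}}-c(x)\mathbf{1},
\]
with $c(x)=\log\sum_k {^kx}\exp(\eta\,{^k[-\hat F(x)]_+^{\operatorname{nor}}})$. With $\alpha=1/\eta$ and $\beta=0$, \eqref{eq:dual_update} becomes
\[
\dot z=[-\hat F(x)]_+^{\operatorname{nor}}-\tfrac{c(x)}{\eta}\mathbf{1}.
\]
To return to primal coordinates we differentiate $x=\nabla h^*(z)$, which is the softmax, and obtain
\[
{^i\dot x}={^ix}\Big({^i\dot z}-\sum_k {^kx}\,{^k\dot z}\Big).
\]
The $\mathbf{1}$ component of $\dot z$ drops out, because the softmax Jacobian annihilates $\mathbf{1}$. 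Substituting ${^i\dot z}={^i[-\hat F]_+}/{^ix}$ then gives
\[
{^i\dot x}=[-{^i\hat F}]_+-{^ix}\sum_k[-{^k\hat F}]_+,
\]
which is exactly \eqref{eq:BNN}.

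The main obstacle is the bookkeeping of the multiple-of-$\mathbf{1}$ ambiguity of $\nabla h$ on the simplex. The dual variable $z$ is determined only modulo $\mathbf{1}$, or equivalently $h$ should be taken as restricted to the affine hull. This ambiguity has to be handled consistently in three places: the closed form of $T$, the term $S\circ T-S$, and the Jacobian of the softmax. I would handle it by working throughout with the explicit softmax and observing that its Jacobian $\operatorname{diag}(x)-xx^\top$ has $\mathbf{1}$ in its kernel. The remaining steps are direct computations.
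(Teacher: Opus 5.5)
Your proof is correct and follows the same route as the paper: the closed form of $T$ comes from the softmax $\nabla h^*$, and then $\dot z$ is pulled back through the softmax Jacobian $\operatorname{diag}(x)-xx^\top$ to obtain \eqref{eq:BNN}. Your explicit handling of the $c(x)\mathbf{1}$ offset in $S\circ T-S$ is more careful than the paper's, which writes $(S\circ T-S)(x)=\eta[-\hat{F}(x)]_+^{\operatorname{nor}}$ exactly and relies without comment on the Jacobian annihilating $\mathbf{1}$.
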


\begin{proof}
    From \eqref{eq:BNN_T}, we have $(S\circ T-S)(x)=\eta[-\hat{F}(x)]_+^{\operatorname{nor}}$. For $h(x)=\sum_{i=1}^n (\prescript{i}{}{x})\log(\prescript{i}{}{x})$, we get $\nabla h(x)=\log x+\mathbf{1}$, $\nabla h^*(z)=\frac{\exp(z)}{\sum_{k=1}^n \exp(z^i)}$, and $\nabla^2 h^*(\nabla h(x))=\operatorname{diag}(x)-xx^\top$,  where $\log$ and $\exp$ are applied entrywise and $\operatorname{diag}$ is the diagonal matrix. Differentiating $x=\nabla h^*(z)$ gives
    \begin{subequations}
        \begin{align}
            \dot{x}(t)&=\alpha \nabla ^2 h^*(\nabla h (x(t))) (S\circ T-S)(x(t))\\
            &=\left(\operatorname{diag}(x)-xx^T\right)[-\hat{F}(x)]_+^{\operatorname{nor}},
        \end{align}
    \end{subequations}
    which in entrywise form yields \eqref{eq:BNN}. The closed form \eqref{eq:BNN_T_closed_form} follows by direct calculations.
\end{proof}

The preceding subsections have demonstrated TMD's ability 
to recover iterative algorithms via discrete-time reductions. BNN and the following subsections operate instead in continuous-time, where TMD's dynamics \eqref{eq:TMD} apply directly.

\subsubsection{Forward-Backward-Forward dynamics} \cite{Bot2020,Tseng2000}
\begin{subequations}\label{eq:FBF}
    \begin{align}
        y(t)&=P_\mathcal{X}(x(t)-\eta F(x(t))) \\
        \dot{x}(t)&=y(t)+\eta (F(x(t))-F(y(t)))-x(t) \label{eq:FBF_xdot}
    \end{align}
\end{subequations}

\begin{proposition}\label{prop:FBF}
    Let $\alpha=1$, $\beta=0$, $S=I-\eta F$, and $\Phi=\eta F$. Then, \eqref{eq:TMD} reduces to \eqref{eq:FBF}. Moreover,  the convergence follows from \Cref{cor:converge}.
\end{proposition}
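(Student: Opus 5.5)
The plan mirrors the proof of \Cref{prop:EG}, now in continuous time and with the Euclidean geometry that forward-backward-forward dynamics uses. We take $h=\frac12\|\cdot\|^2$ restricted to $\mathcal{X}$ (i.e., $h(x)=\frac12\|x\|^2+\iota_\mathcal{X}(x)$). Then $\nabla h^*=P_\mathcal{X}$, and on $\mathcal{X}$ the dual map is the identity, so $\nabla h = I$ there.

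\textbf{Step 1: the conditions.} Since $F$ is $L$-Lipschitz, for $0<\eta<\frac1L$ the map $S=I-\eta F$ is $(1-\eta L)$-strongly monotone, which gives \ref{cond:S}. Because $\Phi=\eta F$ and $F$ is pseudo-monotone, $\operatorname{SOL}(\mathcal{X},\Phi)=\operatorname{SOL}(\mathcal{X},F)$ is VS w.r.t.\ $\Phi$. This gives \ref{cond:VS+} (hence \ref{cond:VS}) and \ref{cond:surrogate}. For \ref{cond:ST}, we have $S+\Phi=I$, so
\[
T=(I+\mathcal{N}_\mathcal{X})^{-1}\circ(I-\eta F)=P_\mathcal{X}\circ(I-\eta F),
\]
which is single-valued because the resolvent of the normal cone is the Euclidean projection. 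So $T$ is well-defined, and $T(x(t))=y(t)$.

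\textbf{Step 2: the dynamics.} We have
\[
(S\circ T-S)(x)=y-\eta F(y)-x+\eta F(x),
\]
which is exactly the right-hand side of \eqref{eq:FBF_xdot}. With $\alpha=1$ and $\beta=0$, \eqref{eq:dual_update} therefore reads $\dot z=y+\eta(F(x)-F(y))-x$.

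\textbf{Step 3: identifying $\dot z$ with $\dot x$.} This is the main obstacle, since $x=P_\mathcal{X}(z)$ and in general $\dot x\neq\dot z$. We would first show that $\mathcal{X}$ is forward invariant for the flow $\dot x=y+\eta(F(x)-F(y))-x$. Write
\[
\dot x=(y-x)+\eta(F(x)-F(y)),
\]
where $y-x$ points into $\mathcal{X}$ (as $y\in\mathcal{X}$). The remaining term is the standard FBF feasibility issue. For it we would invoke the known result of \cite{Bot2020} that FBF trajectories starting in $\mathcal{X}$ are well defined, or else interpret \eqref{eq:FBF} in the same sense as that reference.

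Given invariance, we initialize $z(0)=x(0)\in\mathcal{X}$ and take $x(t)=z(t)$. Then $z(t)$ stays in $\mathcal{X}$, so $x(t)=P_\mathcal{X}(z(t))=z(t)$, and hence $\dot x=\dot z$. Consequently \eqref{eq:TMD} coincides with \eqref{eq:FBF}.

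\textbf{Step 4: convergence.} With \ref{cond:S}, \ref{cond:VS+}, \ref{cond:surrogate} and \ref{cond:ST} verified, \Cref{cor:converge} yields convergence to a point of $\operatorname{SOL}(\mathcal{X},\Phi)\subseteq\operatorname{SOL}(\mathcal{X},F)$. The Lyapunov function in this case is $\frac12\|\bar x-x\|^2$.
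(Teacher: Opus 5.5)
\textbf{Gap in Step 3.} Your Steps 1, 2 and 4 match the paper. The paper's proof identifies $y(t)=T(x(t))$ via $(I+\mathcal{N}_\mathcal{X})^{-1}=P_\mathcal{X}$ and then reads \eqref{eq:FBF_xdot} directly as $\dot x=(S\circ T-S)(x)$, so it takes the dual map to be the identity. Step 3 fails. $\mathcal{X}$ is in general \emph{not} forward invariant under the FBF flow, and \cite{Bot2020} does not claim it is: it gives well-posedness of trajectories in the whole space, with $F$ Lipschitz there, not viability in $\mathcal{X}$.

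Here is a counterexample. Take $\mathcal{X}=[-M,M]\times[0,M]$ with $M\ge 1$, $F(x)=(-x_2-1,\,x_1)$, $\eta\in(0,1)$ and $x(0)=0$. This $F$ is monotone (its linear part is skew), $1$-Lipschitz, and $\operatorname{SOL}(\mathcal{X},F)\neq\emptyset$ by compactness. Then $x-\eta F(x)=(\eta,0)\in\mathcal{X}$, so $y=(\eta,0)$ and $F(y)=(-1,\eta)$. Hence $\dot x(0)=(\eta,0)+\eta\bigl((-1,0)-(-1,\eta)\bigr)=(\eta,-\eta^2)$, which exits through the edge $x_2=0$. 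With your choice $h=\frac12\|\cdot\|^2+\iota_\mathcal{X}$, TMD returns $x=P_\mathcal{X}(z)$, which stays in $\mathcal{X}$, while the FBF state leaves it. So the two dynamics genuinely differ, and the reduction claimed in Step 3 is false for this $h$.

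\textbf{How to fix it.} Do not restrict $h$. Take $h=\frac12\|\cdot\|^2$ on all of $\mathbb{R}^n$, so $\nabla h^*=I$ and $z\equiv x$. Then \eqref{eq:FBF_xdot} is literally \eqref{eq:dual_update} with $\alpha=1,\beta=0$, and no invariance is needed. Feasibility enters only through $y=T(x)\in\mathcal{X}$, which is exactly how FBF works. This sits slightly outside the paper's formal requirement that $\nabla h^*$ map into $\mathcal{X}$, but it is what the paper's one-line proof implicitly uses.

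The convergence argument still holds when $x(t)\notin\mathcal{X}$. Because $\beta=0$, the computation \eqref{eq:Vdot} uses variational stability only at $T(x)\in\mathcal{X}$, and the normal-cone inequality only with $\bar x\in\mathcal{X}$. The last step needs $S=I-\eta F$ to be strongly monotone along the trajectory, i.e.\ on all of $\mathbb{R}^n$. That requires $F$ to be Lipschitz on $\mathbb{R}^n$ with $\eta<1/L$, which is the standing assumption in \cite{Bot2020}. Your Step 1 should state the strong monotonicity on that larger domain accordingly.
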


\begin{proof}
    With the selections, $T=(I+\mathcal{N}_\mathcal{X})^{-1}\circ(I-\eta F)$. Since $(I+\mathcal{N}_\mathcal{X})^{-1}$ is the projection onto $\mathcal{X}$, we get 
    $y(t)=T(x(t))$. Then, \eqref{eq:FBF_xdot} is equivalent to $\dot{x}(t)=(S\circ T-S)(x(t))$.
\end{proof}

To end this section, we demonstrate how we leverage the TMD framework to improve the discounted mirror descent by correcting a structural limitation in its equilibrium behavior.

\subsubsection{Discounted Mirror Descent (DMD)}
The DMD reads
\begin{subequations} \label{eq:DMD}
\begin{align}
    \dot{z}(t)&= \gamma(-F(x(t))-z(t)) \\
    x(t)&=\nabla h^*(z(t)),
\end{align}
\end{subequations}
with $\gamma>0$. As noted in \cite{Gao2024a}, the equilibrium of \eqref{eq:DMD} does not lie in $\operatorname{SOL}(\mathcal{X},F)$, but in a perturbed solution set. Our approach is to 
precondition $-F$ via the target correction mechanism s.t. the equilibria are recalibrated to match $\operatorname{SOL}(\mathcal{X},F)$.

\begin{proposition}\label{prop:DMD}
    Consider the following two cases:
    \begin{enumerate}
        \item $\alpha=\gamma$, $\beta=0$, $S=\nabla h$, $\Phi=\eta F$
        \item $\alpha=\beta=\gamma$, $S=\nabla h-\eta F$, $\Phi=\eta F$.
    \end{enumerate}
    In either case, \eqref{eq:TMD} reduces to the following calibrated DMD:
    \begin{subequations}\label{eq:cali_DMD}
        \begin{align}
            \dot{z}(t)&=\gamma\!\left(\tilde{F}(x(t))-z(t)\right)\label{eq:DMD_u}\\
            x(t)&=\nabla h^*(z(t)),
        \end{align}
    \end{subequations}
    where $\tilde{F}=S\circ T$. Moreover, the convergence follows from \Cref{cor:converge}.
\end{proposition}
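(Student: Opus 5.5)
The plan is a direct substitution in each case, using the identity $z(t)=\nabla h(x(t))$ along trajectories, followed by checking \ref{cond:S}--\ref{cond:ST} and \ref{cond:VS+} by reusing \Cref{prop:PPA} and \Cref{prop:EG}. Throughout I assume $h$ is Legendre, so that $\nabla h\circ\nabla h^*$ is the identity on the dual trajectory. This is the standing convention that makes \eqref{eq:apply_dual} valid, and it gives $z(t)=\nabla h(x(t))$ from \eqref{eq:dual_map}.

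\emph{Case 1.} Take $\alpha=\gamma$, $\beta=0$, $S=\nabla h$. Then \eqref{eq:dual_update} reads
\begin{align*}
\dot z=\gamma\bigl(\nabla h(T(x))-\nabla h(x)\bigr)=\gamma\bigl(S(T(x))-z\bigr),
\end{align*}
which is \eqref{eq:DMD_u} with $\tilde F=S\circ T$. The conditions are exactly those verified in \Cref{prop:PPA}. For $\eta$ small, $\nabla h+\eta F$ is strongly monotone, so $T$ is well-defined by \Cref{lem:well-defined}. Pseudo-monotonicity of $F$ gives \ref{cond:VS+}, and $\operatorname{SOL}(\mathcal{X},\eta F)=\operatorname{SOL}(\mathcal{X},F)$ gives \ref{cond:surrogate}.

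\emph{Case 2.} Take $\alpha=\beta=\gamma$, $S=\nabla h-\eta F$, $\Phi=\eta F$. Then
\begin{align*}
\dot z=\gamma\bigl(S(T(x))-\nabla h(x)+\eta F(x)-\eta F(x)\bigr)=\gamma\bigl(S(T(x))-z\bigr).
\end{align*}
Here the $-\beta\Phi$ term cancels the $-\eta F$ part of $S(x)$, so the result is again \eqref{eq:DMD_u}. Conditions \ref{cond:S}--\ref{cond:ST} and \ref{cond:VS+} hold for $\eta$ small, exactly as in \Cref{prop:EG}. Note that in that case $T=\nabla h^*\circ(\nabla h-\eta F)$.

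In both cases, \Cref{cor:converge} then gives convergence to a point of $\operatorname{SOL}(\mathcal{X},\Phi)=\operatorname{SOL}(\mathcal{X},F)$. As a sanity check on the claimed recalibration, an equilibrium of \eqref{eq:cali_DMD} satisfies $\nabla h(x)=S(T(x))$. In Case 1 this says $\nabla h(x)=\nabla h(T(x))$, so $T(x)=x$. In Case 2 it says $\nabla h(x)-\eta F(x)=S(T(x))$, i.e., $S(x)=S(T(x))$, and strong monotonicity of $S$ forces $T(x)=x$. The end of the proof of \Cref{thm:asymp} then places $x$ in $\operatorname{SOL}(\mathcal{X},\Phi)$, in contrast with the perturbed equilibria of \eqref{eq:DMD}.

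The main obstacle is the identification $z=\nabla h(x)$. If $\nabla h^*$ is not injective, as happens on a constrained $\mathcal{X}$ where $z$ may drift along normal-cone directions, then only $z\in\nabla h(x)+\mathcal{N}_\mathcal{X}(x)$ holds. I would handle this by invoking the Legendre/interior-mirror-map assumption, as implicitly done in \eqref{eq:apply_dual}. Failing that, I would interpret the $-z$ in \eqref{eq:DMD_u} as $-\nabla h(x)$, which does not affect the Lyapunov computation \eqref{eq:Vdot}.
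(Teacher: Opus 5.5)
Your substitutions in both cases, followed by reusing the condition checks of \Cref{prop:PPA} and \Cref{prop:EG}, are exactly the paper's proof. The paper rests on the same identity $\alpha S+\beta\Phi=\gamma\nabla h=\gamma z$ and asserts $z=\nabla h(x)$ without comment, so your Legendre caveat is a fair addition.

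One correction to your optional sanity check. In Case 2 we have $\nabla h=S+\eta F$, so the equilibrium condition $\nabla h(x)=S(T(x))$ becomes $S(T(x))=S(x)+\eta F(x)$. You wrote $\nabla h(x)-\eta F(x)$ on the left instead. As a result, the step $S(T(x))=S(x)\Rightarrow T(x)=x$ does not follow from what you wrote.

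The conclusion is still true. At an equilibrium $\dot z=0$, so $\dot V=-\langle\dot z,\bar x-x\rangle=0$, and the bound \eqref{eq:apply_VS_Monotone} then forces $T(x)=x$. Hence $x\in\operatorname{SOL}(\mathcal{X},\Phi)$ by the end of the proof of \Cref{thm:asymp}, and the same argument covers Case 1. In any case, this check is not needed for the proposition itself.
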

\begin{proof}
    Since $\alpha S+\beta \Phi=\nabla h=z$ in both cases, \eqref{eq:TMD} reduces to \eqref{eq:cali_DMD}. Condition verification follows that in \Cref{prop:PPA} and \Cref{prop:EG} for Case $1$ and Case $2$, respectively.
\end{proof}

\begin{remark}\label{rem:MD2}
    Ref. \cite{Gao2024a} introduces the second-order mirror descent (MD2) to improve DMD, using an equilibrium-independent passivity (EIP) argument. Since \eqref{eq:TMD_u} is output strictly EIP, the same technique applies here and can yield a higher-order TMD: with $\gamma_1,\gamma_2>0$,
    \begin{subequations}\label{eq:TMD_u2}
    \begin{flalign}
        \,\dot{z}(t)&=\alpha(S\circ T-S)(x(t))-\beta\Phi(t)-\gamma_1(x(t)-\xi(t))\hspace{-10pt}&\\
        \dot{\xi}(t)&= \gamma_2(x(t)-\xi(t)),&
    \end{flalign}
    \end{subequations}
    Setting $\alpha=0$ recovers MD2. Moreover, while MD2 requires interior solutions, \eqref{eq:TMD_u2} is free from this restriction.
\end{remark}

\section{Geometric Ensembles}
\label{sec:GE}

In this section, we demonstrate the ability of TMD to run a \emph{geometric ensemble}: multiple algorithms solve the same problem using distinct mirror maps, while their dual updates are driven by a shared ensemble state. Formally, consider $N$ TMD algorithms $\mathcal{A}_i$, each equipped with a distinct mirror map $\nabla (h^i)^*$, primal state $x^i$, and dual state $z^i$, for $i=1,\ldots,N$. Then, the proposed ensemble TMD dynamics is
    \begin{subequations} \label{eq:TMD_ensemble}
    \begin{align}
        \dot{z}^i(t)&=\alpha(S\circ T-S)(x^{\operatorname{en}}(t))\nonumber\\
        &\hspace{6em} -\beta\Phi(x^{\operatorname{en}}(t)),\quad z^i(0)\in\mathbb{R}^n \label{eq:ensemble_TMD_dual}\\
        x^i(t)&= \nabla(h^i)^*(z^i(t)) \label{eq:ensemble_TMD_map}\\
        x^{\operatorname{en}}(t)&=\frac{1}{N} \sum_{k=1}^N x^k(t). \label{eq:ensemble_TMD_avg}
    \end{align}
    \end{subequations}

Beyond its conceptual appeal as an analogue of the wisdom of crowds, the following theorem shows that the ensemble state 
$x^{\operatorname{en}}$ reduces to a single TMD instance with a synthesized mirror map --- hence the name \emph{geometric ensemble}. This is made possible by the decoupling of $\nabla h$ from $S$, $T$, and $\Phi$ in TMD, which allows each $\mathcal{A}_i$ to adopt a distinct mirror map without altering the shared dual update. The synthesized mirror map is given by the infimal convolution, defined as follows.

\begin{definition}[Infimal Convolution {\cite[Chapter~12]{Bauschke2017}}]
    Let $\mathbb{V}$ be a vector space and $f,g:\mathbb{V}\to\bar{\mathbb{R}}=\mathbb{R}\cup\{+\infty\}$. Their infimal convolution, denoted by $f\infconv g:\mathbb{V}\to\bar{\mathbb{R}}$, is
    \begin{align}
        (f\infconv g)(v)=\inf \{f(a)+g(b) : a,b\in\mathbb{V}, a+b=v\}.
    \end{align}
\end{definition}

\begin{lemma}[{\cite[Chapter~12--15]{Bauschke2017}}] \label{lem:inf_conv}
    Let $f,g$ be proper convex functions. Then, $f\infconv g$ is convex and $(f\infconv g)^*=f^*+g^*$.
\end{lemma}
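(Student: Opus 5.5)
We prove the two claims of \Cref{lem:inf_conv} separately: convexity of $f\infconv g$ directly from the definition, and the conjugate identity $(f\infconv g)^*=f^*+g^*$ by an interchange of suprema. Both parts are elementary and hold without lower semicontinuity, provided we use the convention $+\infty-\infty$ does not occur. This is guaranteed by properness: $f,g>-\infty$ everywhere.

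For convexity, we show that the strict epigraph of $f\infconv g$ is the Minkowski sum of the strict epigraphs of $f$ and $g$. Suppose $(f\infconv g)(v)<r$. Then there exist $a+b=v$ with $f(a)+g(b)<r$. Choose $r_1>f(a)$ and $r_2>g(b)$ with $r_1+r_2=r$; this splits $(v,r)$ as $(a,r_1)+(b,r_2)$. Conversely, any such sum lies in the strict epigraph. Minkowski sums of convex sets are convex, and a function whose strict epigraph is convex is convex. This argument also allows the value $-\infty$, in the sense of a convex extended-real function.

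Alternatively, take $v_1,v_2$ and $\lambda\in[0,1]$, together with near-optimal decompositions $a_j+b_j=v_j$. Form $a=\lambda a_1+(1-\lambda)a_2$ and $b$ analogously. Then use convexity of $f$ and $g$ and let the slack tend to $0$.

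For the conjugate, fix $y$ and compute
\begin{align*}
(f\infconv g)^*(y)&=\sup_{v}\Big(\langle y,v\rangle-\inf_{a+b=v}\{f(a)+g(b)\}\Big)\\
&=\sup_{v}\sup_{a+b=v}\big(\langle y,a\rangle-f(a)+\langle y,b\rangle-g(b)\big)\\
&=\sup_{a,b\in\mathbb{V}}\big(\langle y,a\rangle-f(a)\big)+\big(\langle y,b\rangle-g(b)\big)=f^*(y)+g^*(y).
\end{align*}
The justifications are as follows. The second line uses $-\inf=\sup(-\cdot)$. The third line merges the iterated supremum over $v$ and over pairs summing to $v$ into a single supremum over all $(a,b)\in\mathbb{V}^2$. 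The supremum then separates because the objective is a sum of a function of $a$ alone and a function of $b$ alone.

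The only delicate point is the extended-real arithmetic in these manipulations. Since $f$ and $g$ are proper, they never take the value $-\infty$, and each is finite somewhere. Consequently $f^*,g^*>-\infty$, so the sum $f^*+g^*$ is well defined, possibly $+\infty$.

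A second subtlety concerns the case $(f\infconv g)(v)=-\infty$ at some $v$. In that case both sides of the identity equal $+\infty$ for every $y$, so the identity still holds. We check this explicitly, but it requires no further work.

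Finally, the bracketed citation to \cite[Chapter~12--15]{Bauschke2017} also covers the regime needed in \Cref{sec:GE}, namely exactness and differentiability of the convolution. Those properties are not part of the present statement, so the proof stops once both displayed claims are established.
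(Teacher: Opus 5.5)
Your proof is correct. It matches the standard argument behind the result the paper cites from \cite{Bauschke2017}: convexity follows because the strict epigraph of $f\infconv g$ is the Minkowski sum of the strict epigraphs of $f$ and $g$, and the conjugate identity follows by merging and then separating the suprema, with properness ruling out $+\infty-\infty$. The paper gives no proof of its own beyond that citation, so there is nothing further to compare.
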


\begin{theorem} \label{thm:ensemble}
    If each $\mathcal{A}^i$ has initial dual state $z^i(0)\in\mathbb{R}^n$, then the ensemble state $x^{\operatorname{en}}$ in \eqref{eq:TMD_ensemble} satisfies
    \begin{subequations}
        \begin{align}
            \dot{z}(t)&=\alpha (S\circ T-S)(x^{\operatorname{en}}(t))-\beta\Phi(x^{\operatorname{en}}(t)) \label{eq:dual_update_shared}\\
            x^{\operatorname{en}}(t)&= \nabla (h^{\operatorname{en}})^*(z(t)), \ \ \text{with}\ \ z(0)=0, \label{eq:geometric_ensemble}
        \end{align}
        where $h^{\operatorname{en}}=\bar{h}^i\infconv\ldots\infconv \bar{h}^N$ with $\bar{h}^i(x)=\frac{1}{N}h^i(Nx)-\langle z^i(0), x\rangle$.
    \end{subequations}
\end{theorem}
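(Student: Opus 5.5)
The key observation is that all $N$ dual states in \eqref{eq:TMD_ensemble} are driven by the \emph{same} right-hand side \eqref{eq:ensemble_TMD_dual}. Consequently each difference $z^i(t)-z^i(0)$ is common to all $i$. I will define
\begin{align*}
z(t)\coloneqq z^i(t)-z^i(0) = \int_0^t \left[\alpha(S\circ T-S)(x^{\operatorname{en}}(s))-\beta\Phi(x^{\operatorname{en}}(s))\right]ds ,
\end{align*}
which does not depend on $i$, satisfies $z(0)=0$, and obeys \eqref{eq:dual_update_shared} by construction. It then remains to prove the static identity
\begin{align*}
\frac1N\sum_{i=1}^N \nabla(h^i)^*\bigl(z+z^i(0)\bigr)=\nabla(h^{\operatorname{en}})^*(z)\quad\text{for all } z .
\end{align*}
Combined with \eqref{eq:ensemble_TMD_map} and \eqref{eq:ensemble_TMD_avg}, this identity gives \eqref{eq:geometric_ensemble}.

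To prove the identity, I will first compute the conjugate of each $\bar h^i$. Write $g(x)=\frac1N h^i(Nx)$. Substituting $y=Nx$ gives
\begin{align*}
g^*(w)=\sup_x\{\langle w,x\rangle-\tfrac1N h^i(Nx)\}=\tfrac1N (h^i)^*(w).
\end{align*}
The linear term $-\langle z^i(0),x\rangle$ shifts the argument of the conjugate, so
\begin{align*}
(\bar h^i)^*(w)=\tfrac1N (h^i)^*\bigl(w+z^i(0)\bigr).
\end{align*}
Applying \Cref{lem:inf_conv} repeatedly, by induction over the $N$-fold infimal convolution, yields
\begin{align*}
(h^{\operatorname{en}})^*=\sum_i (\bar h^i)^*=\frac1N\sum_i (h^i)^*\bigl(\cdot+z^i(0)\bigr).
\end{align*}
Differentiating this expression gives exactly the left-hand side of the identity, with the chain rule contributing the factor $1$ from the shift and the factor $\frac1N$ from the scaling.

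The main obstacle is justifying the use of $\nabla(h^{\operatorname{en}})^*$. One issue is that \Cref{lem:inf_conv} requires each $\bar h^i$ to be proper and convex, which holds since $h^i$ is strictly convex on $\mathcal{X}$, extended by $+\infty$ off $\mathcal{X}$. More importantly, the statement writes $\nabla(h^{\operatorname{en}})^*$, and I will interpret it as the gradient of the function $(h^{\operatorname{en}})^*$, which by the conjugate computation above is a finite sum of differentiable functions $(h^i)^*$ on $\mathbb{R}^n$. This interpretation avoids any need to establish closedness of $h^{\operatorname{en}}$ or exactness of the infimal convolution. I would add a remark that $\nabla(h^{\operatorname{en}})^*$ maps into $\frac1N\sum_i \mathcal{X}=\mathcal{X}$, using convexity of $\mathcal{X}$, so the ensemble is a legitimate TMD on $\mathcal{X}$.
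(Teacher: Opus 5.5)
Your proposal is correct and follows essentially the same route as the paper: the shared dual update gives $z^i(t)=z(t)+z^i(0)$, the substitution $u=Nx$ gives $(\bar h^i)^*=\frac{1}{N}(h^i)^*(\cdot+z^i(0))$, and \Cref{lem:inf_conv} identifies $\sum_i(\bar h^i)^*$ with $(h^{\operatorname{en}})^*$, whose gradient is the average of the individual mirror maps. Your added remarks are sound clarifications that the paper leaves implicit: you read $\nabla(h^{\operatorname{en}})^*$ as the gradient of this finite-valued sum, and you note that its image lies in $\frac{1}{N}\sum_i\mathcal{X}=\mathcal{X}$.
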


\begin{proof}
    Since \eqref{eq:dual_update_shared} is common to all agents, $z^i(t)-z^j(t)$ depends only on initial values and forms a fixed pattern in the dual space. Thus, we can describe the dual update consistently as $\dot{z}=\alpha(S\circ T-S)(x^{\operatorname{en}})-\beta\Phi(x^{\operatorname{en}})$ with $z^i(t)=z(t)+z^i(0)$. Note that, by definition, we get
    \begin{subequations}\label{eq:hbar}
        \begin{align}
            (\bar{h}^i)^*(z)&=\sup_x\! \left\{\langle z, x\rangle-\frac{1}{N}h^i(Nx)+\langle z^i(0), x\rangle\right\}  \\
            &=\sup_u\! \left\{\langle z+z^i(0), \frac{u}{N}\rangle-\frac{1}{N}h^i(u)\right\} \label{eq:change_var}\\
            &=\frac{1}{N} (h^i)^*(z+z^i(0)),
        \end{align}
    \end{subequations}
    where \eqref{eq:change_var} is by $u=Nx.$ Then, \eqref{eq:ensemble_TMD_avg} and \eqref{eq:ensemble_TMD_map} yields
    \begin{subequations}
        \begin{align}
            (h^{\operatorname{en}})^*(z) &=\frac{1}{N}\sum_{k=1}^N (h^k)^*(z+z^k(0))  \\
            &=\sum_{k=1}^N (\bar{h}^k)^*(z)=\left(\bar{h}^1\infconv\ldots\infconv\bar{h}^N\right)^*(z), \label{eq:apply_conv}
        \end{align}
    \end{subequations}
    where \eqref{eq:apply_conv} is by \eqref{eq:hbar} and \Cref{lem:inf_conv}.
\end{proof}

\Cref{thm:ensemble} states that the ensemble behaves as a single TMD with synthesized mirror map $\nabla(h^{\operatorname{en}})^*$, and thus enjoys the convergence results from \Cref{thm:asymp} or \Cref{cor:converge}. The form of the synthesized mirror map indicates that the geometry ensemble not only depends on the distinct mirror maps but also on the initial dual values. This suggests that even with the same mirror map $\nabla h^i = \nabla h$ for all $i$, distinct $z^i(0)$ still yields a nontrivial synthesized mirror map $\nabla(h^{\mathrm{en}})^*$.

\section{Conclusions}
\label{sec:conclusion}
In this paper, we proposed Target Mirror Descent (TMD), a unified framework for solving monotone variational inequalities via a target point correction mechanism in the dual update. We established rigorous convergence guarantees under a set of flexible design conditions, and demonstrated that TMD subsumes several landmark algorithms. Beyond enhancing stability via the feedback correction, TMD decouples the mirror map from the target determination functions, which enables \emph{geometric ensembles}. We showed that the ensemble state 
evolves as a single TMD with a synthesized mirror map determined by the individual mirror maps and 
initial dual states, and thus inherits convergence guarantees. Future work may consider generalizing TMD to non-autonomous systems so that accelerated techniques can be analyzed.

\bibliographystyle{IEEEtran}
\bibliography{reference.bib}

\begin{thebibliography}{10}
\providecommand{\url}[1]{#1}
\csname url@samestyle\endcsname
\providecommand{\newblock}{\relax}
\providecommand{\bibinfo}[2]{#2}
\providecommand{\BIBentrySTDinterwordspacing}{\spaceskip=0pt\relax}
\providecommand{\BIBentryALTinterwordstretchfactor}{4}
\providecommand{\BIBentryALTinterwordspacing}{\spaceskip=\fontdimen2\font plus
\BIBentryALTinterwordstretchfactor\fontdimen3\font minus \fontdimen4\font\relax}
\providecommand{\BIBforeignlanguage}[2]{{%
\expandafter\ifx\csname l@#1\endcsname\relax
\typeout{** WARNING: IEEEtran.bst: No hyphenation pattern has been}%
\typeout{** loaded for the language `#1'. Using the pattern for}%
\typeout{** the default language instead.}%
\else
\language=\csname l@#1\endcsname
\fi
#2}}
\providecommand{\BIBdecl}{\relax}
\BIBdecl

\bibitem{Korpelevich1976}
G.~M. Korpelevich, ``The extragradient method for finding saddle points and other problems,'' \emph{Matecon}, vol.~12, pp. 747--756, 1976.

\bibitem{Mertikopoulos2018a}
P.~Mertikopoulos and M.~Staudigl, ``Stochastic {{Mirror Descent Dynamics}} and {{Their Convergence}} in {{Monotone Variational Inequalities}},'' \emph{Journal of Optimization Theory and Applications}, vol. 179, no.~3, pp. 838--867, 2018.

\bibitem{Eckstein1993}
J.~Eckstein, ``Nonlinear {{Proximal Point Algorithms Using Bregman Functions}}, with {{Applications}} to {{Convex Programming}},'' \emph{Mathematics of Operations Research}, vol.~18, no.~1, pp. 202--226, 1993.

\bibitem{ELFAROUQ2001}
N.~EL~FAROUQ, ``Pseudomonotone {{Variational Inequalities}}: {{Convergence}} of {{Proximal Methods}},'' \emph{Journal of Optimization Theory and Applications}, vol. 109, no.~2, pp. 311--326, 2001.

\bibitem{Nemirovski2004}
A.~Nemirovski, ``Prox-{{Method}} with {{Rate}} of {{Convergence O}}(1/t) for {{Variational Inequalities}} with {{Lipschitz Continuous Monotone Operators}} and {{Smooth Convex-Concave Saddle Point Problems}},'' \emph{SIAM Journal on Optimization}, vol.~15, no.~1, pp. 229--251, 2004.

\bibitem{Dang2015}
C.~D. Dang and G.~Lan, ``On the convergence properties of non-{{Euclidean}} extragradient methods for variational inequalities with generalized monotone operators,'' \emph{Computational Optimization and Applications}, vol.~60, no.~2, pp. 277--310, 2015.

\bibitem{Diakonikolas2021}
J.~Diakonikolas, C.~Daskalakis, and M.~I. Jordan, ``Efficient methods for structured nonconvex-nonconcave min-max optimization,'' in \emph{International Conference on Artificial Intelligence and Statistics}.\hskip 1em plus 0.5em minus 0.4em\relax PMLR, 2021, pp. 2746--2754.

\bibitem{Facchinei2003}
F.~Facchinei and J.-S. Pang, \emph{Finite-Dimensional Variational Inequalities and Complementarity Problems}, ser. Springer Series in Operations Research.\hskip 1em plus 0.5em minus 0.4em\relax Springer, 2003.

\bibitem{Bauschke2017}
H.~H. Bauschke and P.~L. Combettes, \emph{Convex {{Analysis}} and {{Monotone Operator Theory}} in {{Hilbert Spaces}}}, ser. {{CMS Books}} in {{Mathematics}}.\hskip 1em plus 0.5em minus 0.4em\relax Springer International Publishing, 2017.

\bibitem{Bui2021}
M.~N. B{\`u}i and P.~L. Combettes, ``Bregman {{Forward-Backward Operator Splitting}},'' \emph{Set-Valued and Variational Analysis}, vol.~29, no.~3, pp. 583--603, 2021.

\bibitem{Sandholm2010}
W.~H. Sandholm, \emph{Population Games and Evolutionary Dynamics}, ser. Economic Learning and Social Evolution.\hskip 1em plus 0.5em minus 0.4em\relax MIT Press, 2010.

\bibitem{Bot2020}
R.~I. Bo{\c t}, E.~R. Csetnek, and P.~T. Vuong, ``The forward--backward--forward method from continuous and discrete perspective for pseudo-monotone variational inequalities in {{Hilbert}} spaces,'' \emph{European Journal of Operational Research}, vol. 287, no.~1, pp. 49--60, 2020.

\bibitem{Tseng2000}
P.~Tseng, ``A {{Modified Forward-Backward Splitting Method}} for {{Maximal Monotone Mappings}},'' \emph{SIAM Journal on Control and Optimization}, vol.~38, no.~2, pp. 431--446, 2000.

\bibitem{Gao2024a}
B.~Gao and L.~Pavel, ``Second-{{Order Mirror Descent}}: {{Convergence}} in {{Games Beyond Averaging}} and {{Discounting}},'' \emph{IEEE Transactions on Automatic Control}, vol.~69, no.~4, pp. 2143--2157, 2024.

\bibitem{Mokhtari2020}
A.~Mokhtari, A.~Ozdaglar, and S.~Pattathil, ``A {{Unified Analysis}} of {{Extra-gradient}} and {{Optimistic Gradient Methods}} for {{Saddle Point Problems}}: {{Proximal Point Approach}},'' in \emph{Proceedings of the {{Twenty Third International Conference}} on {{Artificial Intelligence}} and {{Statistics}}}.\hskip 1em plus 0.5em minus 0.4em\relax PMLR, 2020, pp. 1497--1507.

\bibitem{Harker1990}
P.~T. Harker and J.-S. Pang, ``Finite-dimensional variational inequality and nonlinear complementarity problems: {{A}} survey of theory, algorithms and applications,'' \emph{Mathematical Programming}, vol.~48, no.~1, pp. 161--220, 1990.

\bibitem{Bauschke2003}
H.~H. Bauschke, J.~M. Borwein, and P.~L. Combettes, ``Bregman {{Monotone Optimization Algorithms}},'' \emph{SIAM Journal on Control and Optimization}, vol.~42, no.~2, pp. 596--636, 2003.

\bibitem{Hines2011}
G.~H. Hines, M.~Arcak, and A.~K. Packard, ``Equilibrium-independent passivity: {{A}} new definition and numerical certification,'' \emph{Automatica}, vol.~47, no.~9, pp. 1949--1956, 2011.

\end{thebibliography}
\end{document}